\DeclareMathAlphabet{\mathpzc}{OT1}{pzc}{m}{it}
\DeclareMathOperator{\gr}{gr}
\DeclareMathOperator{\Hom}{Hom}
\DeclareMathOperator{\End}{End}
\DeclareMathOperator{\Der}{Der}
\DeclareMathOperator{\Sp}{Sp}
\DeclareMathOperator{\Spec}{Spec}
\DeclareMathOperator{\im}{Im}
\DeclareMathOperator{\Ann}{Ann}
\DeclareMathOperator{\op}{op}
\DeclareMathOperator{\mult}{mult}
\DeclareMathOperator{\Char}{char}
\begin{document}
\newtheorem{MainThm}{Theorem}
\renewcommand{\theMainThm}{\Alph{MainThm}}

\theoremstyle{definition}
\newtheorem{defn}{Definition}[section]
\newtheorem{thm}[defn]{Theorem}
\newtheorem{cor}[defn]{Corollary}
\newtheorem{prop}[defn]{Proposition}
\newtheorem{claim}[defn]{Claim}
\newtheorem{lem}[defn]{Lemma}
\newtheorem{ex}[defn]{Example}
\newtheorem{exs}[defn]{Examples}
\newtheorem{quest}[defn]{Question}
\newtheorem{rmk}[defn]{Remark}
\newtheorem{rmks}[defn]{Remarks}
\newtheorem{constr}[defn]{Construction}
\newtheorem{up}[defn]{Universal Property}
\newtheorem{setup}[defn]{Set-up}
\newtheorem{notn}[defn]{Notation}
\newcommand{\Zp}{{\mathbb{Z}_p}}
\newcommand{\Qp}{{\mathbb{Q}_p}}
\newcommand{\Fp}{{\mathbb{F}_p}}
\newcommand{\A}{\mathcal{A}}
\newcommand{\B}{\mathcal{B}}
\newcommand{\C}{\mathcal{C}}
\newcommand{\D}{\mathcal{D}}
\newcommand{\E}{\mathcal{E}}
\newcommand{\F}{\mathcal{F}}
\newcommand{\G}{\mathcal{G}}
\newcommand{\I}{\mathcal{I}}
\newcommand{\J}{\mathcal{J}}
\renewcommand{\L}{\mathcal{L}}
\newcommand{\sL}{\mathscr{L}}
\newcommand{\M}{\mathcal{M}}
\newcommand{\sM}{\mathscr{M}}
\newcommand{\N}{\mathcal{N}}
\newcommand{\sN}{\mathscr{N}}
\renewcommand{\O}{\mathcal{O}}
\newcommand{\cP}{\mathcal{P}}
\newcommand{\R}{\mathcal{R}}
\newcommand{\cS}{\mathcal{S}}
\newcommand{\T}{\mathcal{T}}
\newcommand{\U}{\mathcal{U}}
\newcommand{\sU}{\mathscr{U}}
\newcommand{\sV}{\mathscr{V}}
\newcommand{\V}{\mathcal{V}}
\newcommand{\W}{\mathcal{W}}
\newcommand{\sW}{\mathscr{W}}
\newcommand{\X}{\mathcal{X}}
\newcommand{\Y}{\mathcal{Y}}
\newcommand{\Z}{\mathcal{Z}}
\newcommand{\PFS}{\mathbf{PreFS}}
\newcommand{\h}[1]{\widehat{#1}}
\newcommand{\hA}{\h{A}}
\newcommand{\hK}[1]{\h{#1_K}}
\newcommand{\hsULK}{\hK{\sU(\L)}}
\newcommand{\hsUFK}{\hK{\sU(\F)}}
\newcommand{\hsULnK}{\hK{\sU(\pi^n\L)}}
\newcommand{\hn}[1]{\h{#1_n}}
\newcommand{\hnK}[1]{\h{#1_{n,K}}}
\newcommand{\w}[1]{\wideparen{#1}}
\newcommand{\wK}[1]{\wideparen{#1_K}}
\newcommand{\invlim}{\varprojlim}
\newcommand{\dirlim}{\varinjlim}
\newcommand{\fr}[1]{\mathfrak{{#1}}}
\newcommand{\LRU}[1]{\sU(\mathscr{#1})}
\newcommand{\et}{\acute et}
\newcommand{\ttBan}{{\text{\bfseries\sf{Ban}}}}
\newcommand{\ts}[1]{\texorpdfstring{$#1$}{}}
\newcommand{\st}{\mid}
\newcommand{\be}{\begin{enumerate}[{(}a{)}]}
\newcommand{\ee}{\end{enumerate}}
\newcommand{\qmb}[1]{\quad\mbox{#1}\quad}
\let\le=\leqslant  \let\leq=\leqslant
\let\ge=\geqslant  \let\geq=\geqslant

\title{Bounded linear endomorphisms of rigid analytic functions}
\author{Konstantin Ardakov}
\address{Konstantin Ardakov \\ Mathematical Institute\\University of Oxford\\Oxford OX2 6GG, UK}
\author{Oren Ben-Bassat}
\address{Oren Ben-Bassat \\ Department of Mathematics \\ University of Haifa \\ Haifa 31905, Israel}
\thanks{}
\subjclass[2010]{14G22; 32C38}
\begin{abstract} Let $K$ be a field of characteristic zero complete with respect to a non-trivial, non-Archimedean valuation. We relate the sheaf $\w\D$ of infinite order differential operators on smooth rigid $K$-analytic spaces to the algebra $\E$ of bounded $K$-linear endomorphisms of the structure sheaf. In the case of complex manifolds, Ishimura proved that the analogous sheaves are isomorphic. In the rigid analytic situation, we prove that the natural map $\w\D \to \E$ is an isomorphism if and only if the ground field $K$ is algebraically closed and its residue field is uncountable.
\end{abstract}
\maketitle
\tableofcontents
\setcounter{tocdepth}{1}  

\section{Introduction}
Ishimura proved in \cite{Ishimura} that the continuous endomorphisms of the structure sheaf of a complex manifold $X$, viewed as a sheaf of Fr\'{e}chet spaces over $\mathbb{C}$ in the classical topology correspond precisely with the infinite order differential operators on $X$. That is to say, he identified this sheaf of continuous endomorphisms with the sheaf of formal differential operators, whose total symbol in a small enough local chart $U\subset X$ around any point, defines a holomorphic function on $U \times \mathbb{C}^{\dim X}$. Some recent articles suggest defining a type of analytic geometry that would work in the same way over a general valuation field $K$ (Archimedean or not) or even a Banach ring. In this type of geometry, one constructs various Grothendieck topologies on the ``affine" objects. The structure sheaf then becomes a sheaf on the site defined by an analytic space and one can try, inspired by Ishimura's theorem, to define infinite order differential operators as the bounded endomorphisms of the structure sheaf over $K$. The structure sheaf in this case is considered in some category of sheaves with extra structure over $K$. This was the treatment used for complex manifolds by Prosmans and Schneiders in \cite{PrSch} who used this to partially interpret the Riemann-Hilbert correspondence as a derived Morita equivalence. This treatment uses their general technology of sheaves with values in a quasi-abelian category.  Their work is closely related to works of Mebkhout and Kashiwara on infinite order differential operators over complex manifolds; see for example \cite{Meb}. 

In this article, we look for an analogue of Ishimura's result for smooth rigid analytic spaces over a characteristic zero field $K$ which is complete with respect to a non-trivial non-Archimedean valuation. Our main result (Theorem \ref{Main}) gives further evidence that this ``endomorphisms'' definition of differential operators is sensible (at least when the ground field $K$ is large enough). Specifically, we prove that when $K$ is a (complete) non-trivially valued non-Archimedean field which is \emph{algebraically closed} and whose \emph{residue field is uncountable}, then the sheaf of bounded endomorphisms of the structure sheaf agrees with the sheaf of infinite order differential operators defined by Wadsley and the first author in \cite{DCapOne}.  

To reassure the reader that in the non-Archimedean setting there are many examples of such 'large' fields, we give the example of the field of \emph{Hahn series}
\[\mathbb{C}((\mathbb{Q}))= \{f= \underset{\gamma \in \mathbb{Q}}\sum a_{\gamma} x^{\gamma} | a_{\gamma} \in \mathbb{C}, \text{support of} \ \ f \ \ \text{well ordered}\}.\]
The non-Archimedean valuation $v$ on $\mathbb{C}((\mathbb{Q}))$ is given by
\[v\left(\underset{\gamma \in \mathbb{Q}}\sum a_{\gamma} x^{\gamma}\right) = \min\{ \gamma : a_\gamma \neq 0 \}, \]
and the residue field of $\mathbb{C}(\mathbb{Q})$ with respect to this valuation is $\mathbb{C}$. A mixed characteristic example can be found by starting with an arbitrary perfect field $k$ of characteristic $p > 0$ and taking $K$ to be the completion of the algebraic closure of the field of fractions of its ring of $p$-typical Witt vectors. The residue field of $K$ is then the algebraic closure of $k$, which is uncountable whenever $k$ is. The field of Puiseux series over the algebraic closure of a finite field gives an example of an uncountable, algebraically closed field of positive characteristic.
\subsection{Acknowledgements}
We would like to thank Kobi Kremnizer for suggesting that the kind of results contained in this article could exist. We would also like to thank Simon Wadsley and Thomas Bitoun for their interest in this work.
\section{Background}
\subsection{Sheaves of Banach spaces on rigid analytic varieties}\label{ShvBan}
Let $X$ be a smooth rigid analytic variety over a field $K$ complete with respect to a non-trivial non-Archimedean valuation. We denote by $X_{w}$ the category whose objects are the affinoid subdomains of $X$ and whose morphisms are the inclusions. This category is equipped with a Grothendieck topology (the weak topology) whose covers are given by collections of affinoid subdomains of $X$ which cover $X$ set-theoretically, and which locally on affinoid subdomains admit finite subcoverings. This topology on the category $X_w$ defines the rigid site of $X$. 

Let $L$ be another field which is complete with respect to a non-trivial non-Archimedean valuation. Recall the category of \emph{non-Archimedean Banach spaces over $L$} discussed at length in \cite{BK}. The morphisms between two such spaces $V,W$ are the \emph{bounded $L$-linear maps}, that is, the $L$-linear maps ${f : V \to W}$ for which there exists $C > 0$ such that $|f(v)|_W \leq C \cdot |v|_V$ for all $v \in V$. We denote the $L$-vector space of such maps by $\Hom_{L}(V,W)$. The category of \emph{pre-sheaves of $L$-Banach spaces} on the rigid site of $X$ is simply the category of functors $X_{w}^{\op}\to \ttBan_{L}$.  The category of \emph{sheaves of $L$-Banach spaces} on the rigid site of $X$ is the full category of pre-sheaves  of $L$-Banach spaces consisting of objects $P$ such that for any $V \in \ttBan_{L}$, $U \mapsto \Hom_L (V, P(U))$ is a sheaf of sets on the rigid site of $X$. This is equivalent to the following condition: for any finite admissible affinoid cover $\coprod U_i \to U$ of an admissible affinoid $U$ of $X$, there is a strict exact sequence 
\[0 \to P(U) \to \prod P (U_i) \to \prod P (U_{ij}).
\]
The category of Banach spaces is not abelian, instead it is quasi-abelian. We do not enter here into a general description of the category of sheaves on a site with values in quasi-abelian category, as we will need only the most basic structures in this article. A study of sheaves valued in quasi-abelian categories over topological spaces was initiated by Schneiders in \cite{Sch}.

The morphisms in the category of sheaves of $L$-Banach spaces on $X_{w}$ between $\mathcal{S}$ and $\mathcal{T}$ are given by  $\Hom_{X_{w}}(\mathcal{S},\mathcal{T}) = \ker (r)$, where 
\[r : \prod_{U \in X_w}\Hom_{L}(\mathcal{S}(U), \mathcal{T}(U)) \longrightarrow  \hspace{-0.6cm} \prod_{V, W \in X_w, W \subset V}\Hom_{L}(\mathcal{S}(V), \mathcal{T}(W))\] 
sends $f= (f_{U})_{U \in X_w}$ to the object $r(f)$  whose components are given by \[r(f)_{V,W} = \sigma_{W\subset V} \circ f_{V} - f_{W}  \circ \tau_{W\subset V}.
\]
Here $\sigma_{W\subset V}:\mathcal{S}(V) \to \mathcal{S}(W)$ and $\tau_{W\subset V}:\mathcal{T}(V) \to \mathcal{T}(W)$ are the restriction maps. We often think of an element $\psi \in \Hom_{X_{w}}(\mathcal{S},\mathcal{T})$ in terms of its ``tuple of components" $(\psi_{U})_{U \in X_w}$, which commute with the restriction maps:
\[\sigma_{W\subset V} \circ \psi_V = \psi_{W}  \circ \tau_{W\subset V}\]
whenever $W \subset V$ are objects of $X_w$. 

\subsection{Bornological spaces}\label{BornIntro} Motivated by the possible applications hinted at in the introduction, we would like to find a closed, symmetric, monoidal, quasi-abelian category which contains the category of $L$-Banach spaces fully faithfully, and is large enough to contain objects like $\Hom_{X_{w}}(\mathcal{S},\mathcal{T})$ described above. The category of complete locally convex topological $L$-vector spaces is not appropriate, because its natural symmetric monoidal structure does not extend to a \emph{closed} symmetric monoidal structure --- see \cite[Example 2.7]{Meyer2008}. Instead, we will work with the category of \emph{complete, convex, bornological vector spaces} because it satisfies all of these desiderata, as well as being fairly easy to work with. The nice thing about this category is that the categorical limit over a diagram is described as an explicit simple bornological structure on the limit of the same diagram in the category of vector spaces: see   \cite[Remark 3.44]{BaBe} for more details.  

A \emph{bornological space} is a set together with the data of a collection of distinguished subsets which are called the \emph{bounded subsets}. They are required to cover the underlying set, to be stable under passing to subsets and to be stable under finite unions. Any Fr\'{e}chet space (and, in particular, Banach space) can be considered a complete, convex, bornological space by taking the bounded subsets to be those which are bounded with respect to the metric. We will not give more details here, and instead refer the interested reader to \cite{BaBe},  \cite{BaBeKr} and the references given therein, but we will explain the natural bornological structure on the set $\Hom_{X_{w}}(\mathcal{S},\mathcal{T})$. 

If $V$ and $W$ are $L$-Banach spaces, then the space of bounded $L$-linear maps $\Hom_L(V,W)$ is again an $L$-Banach space, equipped with its operator norm. We will denote the induced bornological structure on this space by $\underline{\Hom}_L(V,W)$.  This construction globalises easily as follows: if $X$ is a smooth rigid analytic space over $K$, then whenever $\mathcal{S}$ and $\mathcal{T}$ are sheaves of $L$-Banach spaces on $X_{w}$ we have a complete convex bornological space $\underline{\Hom}_{X_w}(\mathcal{S},\mathcal{T}) = \ker (r)$ where
\begin{equation}\label{inthom}
\prod_{U \in X_w}\underline{\Hom}_{L}(\mathcal{S}(U), \mathcal{T}(U))\stackrel{r}\to \hspace{-0.6cm} \prod_{V, W \in X_w, W \subset V}\underline{\Hom}_{L}(\mathcal{S}(V), \mathcal{T}(W))
\end{equation}
and the products and kernels are calculated in the category of complete convex bornological spaces over $L$. The bounded subsets $\Hom_{X_w}(\mathcal{S},\mathcal{T}) $ (which define the bornology of $\underline{\Hom}_{X_w}(\mathcal{S},\mathcal{T}) $) are those subsets of  $\Hom_{X_w}(\mathcal{S},\mathcal{T}) $ whose image under the projection to $\Hom_{L}(\mathcal{S}(U), \mathcal{T}(U))$ are bounded in the operator norm, for each $U\in X_w$.  The underlying sets of such products and kernels in this category agree with those calculated in the category of vector spaces and so the underlying set of $\underline{\Hom}_{X_w}(\mathcal{S},\mathcal{T})$ is just $\Hom_{X_w}(\mathcal{S},\mathcal{T})$.
Explicitly, $\underline{\Hom}_{X_w}(\mathcal{S},\mathcal{T})$ is the set of collections of bounded linear maps from $\mathcal{S}(U)$ to  $\mathcal{T}(U)$ indexed by $U\in X_w$ that are compatible with the restrictions. In other words,  $\underline{\Hom}_{X_w}(\mathcal{S},\mathcal{T})$ is the set $\Hom_{X_w}(\mathcal{S},\mathcal{T})$ equipped with the bornology that is defined by letting the bounded subsets of  $\Hom_{X_w}(\mathcal{S},\mathcal{T})$ be those subsets for which the projection to each Banach space $\underline{\Hom}_{L}(\mathcal{S}(U), \mathcal{T}(U))$ is bounded.

Define $\underline{\mathcal{H}om}_{L}(\mathcal{S},\mathcal{T})$ to be the sheaf of bounded $L$-linear morphisms between $\mathcal{S}$ and $\mathcal{T}$. This is a sheaf of complete convex bornological spaces over $L$, whose value on an affinoid $U$ is given by
\[\underline{\mathcal{H}om}_{L}(\mathcal{S},\mathcal{T})(U) = \underline{\Hom}_{U_w}(\mathcal{S}|_{U},\mathcal{T}|_{U}).
\] 
Define $\underline{\mathcal{E}nd}_{L}(\mathcal{S})= \underline{\mathcal{H}om}_{L}(\mathcal{S},\mathcal{S})$ --- this is a sheaf of complete convex bornological algebras over $L$. The category of sheaves of $L$-Banach algebras (not-necessarily commutative) is similarly defined to be the full subcategory of functors from $X_{w}^{\op}$ to the category of $L$-Banach algebras which satisfy the same sheaf requirement. 

The rest of this paper will be concerned with the following explicit example. Whenever $X$ is a smooth rigid analytic space over $K$ and $U$ is an affinoid subdomain of $X$, the affinoid algebra $\O(U)$ carries a natural $K$-Banach algebra structure with respect to the \emph{supremum seminorm} on $U$, given by
\[ |f|_U := \sup \{ |f(x)| : x \in U \}.\] 
In this way, the structure sheaf $\O_X$ naturally becomes a sheaf of $K$-Banach spaces on $X_w$, and we therefore have at our disposal the sheaf
\[\mathcal{E}_X := \underline{\mathcal{E}nd}_{K}(\mathcal{O}_X)\]
of complete convex bornological $K$-algebras on $X_w$.

\subsection{Infinite order differential operators}
We now summarize some of the paper \cite{DCapOne}. In that article it was assumed that the valuation on the ground field $K$ was \emph{discrete}, whereas in this article we weaken this hypothesis and only demand that the valuation is non-trivial. As stated in \cite{DCapOne} many of the results in that article hold in this greater generality and this includes the results which we need. 

Fix a non-zero element $\pi \in K$ such that $|\pi|<1$. We assume that the reader is familiar with the definition of a \emph{Lie-Rinehart algebra} or more precisely an \emph{$(R, A)$-Lie algebra} where $R$ is a commutative ring and $A$ is a commutative $R$-algebra. It is a pair $(L, \rho)$ where $L$ is an $R$-Lie algebra and an $A$-module, and $\rho:L \to \Der_{R}(A)$ is an $A$-linear $R$-Lie algebra homomorphism satisfying a natural axiom.  For each $(R,A)$-Lie algebra $L$ there is an associated associative $R$-algebra $U(L)$ generated by $A$ and $L$ subject to some obvious relations, called the \emph{enveloping algebra} of $L$. 

Let $X$ be a smooth $K$-affinoid variety so that $\T(X)$ denotes the space $\Der_K \O(X)$ of $K$-linear derivations of $\O(X)$. The unit ball of the Banach algebra $\O(X)$ is the subring of \emph{power-bounded elements} of $\O(X)$, denoted $\O(X)^\circ$. We say that $\L$ is an $\O(X)^\circ$-\emph{lattice} in $\T(X)$ if it is finitely generated as an $\O(X)^\circ$-module, and spans $\T(X)$ as a $K$-vector space. We say that $\L$ is an $\O(X)^\circ$-\emph{Lie lattice} if in addition it is a sub-$(K^\circ,\O(X)^\circ)$-Lie algebra of $\T(X)$; it is not hard to see that $\O(X)^\circ$-Lie lattices always exist. For any $\O(X)^\circ$-Lie lattice $\L$, we write $\h{U(\L)}$ to denote the $\pi$-adic completion of $U(\L)$ and we write $\hK{U(\L)}$ to denote the Noetherian $K$-Banach algebra $K\otimes_{K^\circ}\h{U(\L)}$. The algebra of \emph{infinite order differential operators} on $X$ is
\[\w\D(X) := \invlim \hK{U(\pi^{n} \L)}\]
for any choice of $\O(X)^\circ$-Lie lattice $\L$ in $\T(X)$. Note that $\w\D(X)$ is naturally a $K$-Fr\'{e}chet algebra, with the Banach norms of the Banach $K$-algebras $\hK{U(\pi^n \L)}$ providing a countable family of seminorms on $\w\D(X)$ that define its Fr\'echet topology. It is shown in \cite[Section 6.2]{DCapOne} that this does not depend on the choice of $\L$.  

The algebra $\D(X) = U(\T(X))$ of finite order differential operators is dense in $\w{\D}(X)$. These constructions sheafify to define a sheaf of $K$-algebras $\D$ on $X_w$ which is dense in the sheaf of $K$-Fr\'{e}chet algebras $\w{\D}$ on $X_w$.

\section{Infinite order differential operators as endomorphisms}
In the rest of the article, $K$ will be a non-trivially valued, non-Archimedean valuation field of characteristic zero. The following elementary Lemma is fundamental to our constructions.

\begin{lem}\label{BddDers} Let $X$ be a $K$-affinoid variety. Then every $K$-linear derivation of $\O(X)$ is automatically \emph{bounded}.
\end{lem}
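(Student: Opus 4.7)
The plan is to deduce automatic boundedness from the classical fact that every $K$-algebra homomorphism between $K$-affinoid algebras is automatically continuous (which follows from the uniqueness, up to equivalence, of a Banach $K$-algebra norm on any $K$-affinoid algebra, itself a consequence of the closedness of every maximal ideal in such an algebra).

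Write $A = \O(X)$ and form the algebra of dual numbers $B = A[\epsilon]/(\epsilon^2)$. As an $A$-module, $B = A \oplus A\epsilon$ is free of rank two, and I equip it with the max norm $|a + b\epsilon|_B := \max(|a|_X, |b|_X)$. Then $B$ is a $K$-affinoid algebra: it is the quotient of the Tate-type affinoid algebra $A\langle \epsilon \rangle$ by the ideal $(\epsilon^2)$, which is automatically closed since $A\langle \epsilon \rangle$ is Noetherian, and one checks that the residue norm on the quotient coincides with the max norm.

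The content of the Leibniz rule, combined with $K$-linearity of $\partial$, is precisely the statement that the map
\[ \phi : A \longrightarrow B, \qquad a \longmapsto a + \partial(a)\epsilon \]
is a homomorphism of $K$-algebras. Since both source and target are $K$-affinoid, $\phi$ is automatically bounded. Composing with the obviously norm-decreasing $K$-linear projection $B \to A$, $a + b\epsilon \mapsto b$, recovers $\partial$ as a composition of bounded $K$-linear maps, whence $\partial$ is itself bounded.

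The only delicate step is the verification that $B$ really is a $K$-affinoid algebra with the natural max norm; this is essentially bookkeeping (closedness of $(\epsilon^2)$ and identification of the residue norm), and will be the only point at which one needs to invoke standard affinoid machinery beyond the automatic continuity theorem itself.
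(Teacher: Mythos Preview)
Your argument is correct. The dual-numbers trick cleanly reduces boundedness of a derivation to automatic continuity of a $K$-algebra homomorphism between affinoid algebras, and the bookkeeping you flag is routine: $(\epsilon^2)$ is closed in the Noetherian Banach algebra $A\langle\epsilon\rangle$, and the residue norm on the quotient agrees with the max norm because any lift of $a+b\epsilon$ in $A\langle\epsilon\rangle$ has Gauss norm at least $\max(|a|_X,|b|_X)$, with equality for the lift $a+b\epsilon$ itself.

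This is a genuinely different route from the paper, which does not argue directly but defers to the proof of \cite[Theorem~3.6.1]{FvdPut} (and the discussion in \cite[\S 2.4]{DCapOne}). There the result emerges from the construction of the universally finite differential module $\Omega^f_{A/K}$: one shows this is a finitely generated $A$-module through which every $K$-linear derivation of $A$ factors, and boundedness of derivations drops out of that finiteness. Your approach is more elementary and entirely self-contained, needing only the automatic-continuity theorem for affinoid algebras; the cited approach, by contrast, simultaneously yields the stronger structural fact that $\Der_K(A)=\T(X)$ is a finitely generated $\O(X)$-module, which is precisely what the paper needs later when it forms $\O(X)^\circ$-Lie lattices in $\T(X)$.
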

\begin{proof} This follows from the proof of \cite[Theorem 3.6.1]{FvdPut}; see also the discussion in \cite[Section 2.4]{DCapOne}.
\end{proof}

\begin{defn}
Let $V$ be a $K$-Banach space, and let $d \geq 0$. We say that a family of vectors $(a_\alpha)_{\alpha \in \mathbb{N}^d}$ in $V$ is {\it rapidly decreasing} if 
\[a_\alpha / \pi^{r |\alpha|} \to 0 \qmb{as} |\alpha| \to \infty \qmb{for any} r \in \mathbb{N}.\]
\end{defn}

\begin{rmk}\label{rmk:Equiv} The following conditions are equivalent:
\be \item $(a_\alpha)_{\alpha \in \mathbb{N}^d}$ is rapidly decreasing,
\item there exists $r_0 \in \mathbb{N}$ such that $a_\alpha / \pi^{r |\alpha|} \to 0$ as $|\alpha| \to \infty$ for all $r \in \mathbb{N}$ with $r \geq r_0$,
\item $\sup_{\alpha \in \mathbb{N}^d} |a_\alpha / \pi^{r |\alpha|}| < \infty$ for all $r \in \mathbb{N}$,
\item there exists $r_0 \in \mathbb{N}$ such that $\sup_{\alpha \in \mathbb{N}^d} |a_\alpha / \pi^{r |\alpha|}| < \infty$ for all $r \in \mathbb{N}$ with $r\geq r_0$.
\ee
\end{rmk}

We will always write $\mathbb{D}^d := \Sp K\langle x_1,\ldots, x_d\rangle$ to denote the $d$-dimensional polydisc over $K$. 

\begin{lem}\label{lem:symbol}Let $X$ be a smooth $K$-affinoid variety equipped with an \'etale morphism $g:X \to \mathbb{D}^d$. Let $\partial_i \in \T(X)$ be the canonical lifts of the standard vector fields $\frac{d}{dx_i}\in \T(\mathbb{D}^d)$. Then 
\[\w\D(X) = \left\{ \sum_{\alpha \in \mathbb{N}^d} a_\alpha \partial^\alpha : a_\alpha \in \O(X)\ \  \text{and} \ \ (a_\alpha)_{\alpha \in \mathbb{N}^d} \ \ \text{is rapidly decreasing} \right\}.\] The Fr\'{e}chet structure on $\w\D(X)$ can be defined by the family of
semi-norms \[\left|\sum_{\alpha \in \mathbb{N}^d} a_\alpha \partial^\alpha\right|_R= \underset{\alpha \in \mathbb{N}^d}\sup |a_\alpha|R^{\alpha}\] 
for sufficiently large real numbers $R$.\end{lem}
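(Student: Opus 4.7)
The plan is to compute $\w\D(X) = \invlim_{n} \hK{U(\pi^n \L)}$ explicitly, using a convenient choice of $\O(X)^\circ$-Lie lattice $\L$ that is adapted to the given \'etale chart $g$. I take
\[\L := \O(X)^\circ\partial_1 \oplus \cdots \oplus \O(X)^\circ\partial_d.\]
First I would verify that this is indeed an $\O(X)^\circ$-Lie lattice: \'etaleness of $g$ gives that the $\partial_i$ freely generate $\T(X)$ as an $\O(X)$-module, so $\L$ is finitely generated over $\O(X)^\circ$ and spans $\T(X)$ over $K$; the $\partial_i$ commute ($[\partial_i,\partial_j]=0$, as they are pullbacks of commuting standard vector fields on $\mathbb{D}^d$); and each $\partial_i$ preserves $\O(X)^\circ$ (a standard consequence of \'etaleness over $\mathbb{D}^d$, on which $d/dx_i$ visibly preserves $K^\circ\langle x\rangle$). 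Hence $\pi^n\L$ is also an $\O(X)^\circ$-Lie lattice for each $n \geq 0$.

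Next, by the Poincar\'e--Birkhoff--Witt theorem for Lie--Rinehart algebras applied to the free $\O(X)^\circ$-module $\pi^n\L$, $U(\pi^n\L)$ is free as an $\O(X)^\circ$-module on the monomials $\pi^{n|\alpha|}\partial^\alpha$, $\alpha \in \mathbb{N}^d$. Taking $\pi$-adic completion, using that $\O(X)^\circ$ is itself $\pi$-adically complete and that on it the $\pi$-adic topology coincides with the one induced by the supremum seminorm, gives
\[\h{U(\pi^n\L)} = \Bigl\{\sum_\alpha c_\alpha\, \pi^{n|\alpha|}\partial^\alpha : c_\alpha \in \O(X)^\circ,\ |c_\alpha|_X \to 0\Bigr\}.\]
Tensoring with $K$ and performing the change of variable $a_\alpha := c_\alpha \pi^{n|\alpha|}$ then yields
\[\hK{U(\pi^n\L)} = \Bigl\{\sum_\alpha a_\alpha \partial^\alpha : a_\alpha \in \O(X),\ |a_\alpha|_X |\pi|^{-n|\alpha|} \to 0\Bigr\},\]
equipped with the Banach norm $\sup_\alpha |a_\alpha|_X |\pi|^{-n|\alpha|}$, which is exactly $|\cdot|_R$ for $R := |\pi|^{-n}$.

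Passing to the inverse limit then proves both parts of the lemma. The system of conditions ``$|a_\alpha|_X |\pi|^{-n|\alpha|} \to 0$ for every $n \in \mathbb{N}$'' is precisely the rapidly decreasing condition by Remark \ref{rmk:Equiv}, giving the first formula. The Fr\'echet topology on the inverse limit is defined by the countable family of seminorms $|\cdot|_{R_n}$ with $R_n = |\pi|^{-n}$; since $R_n \to +\infty$, any cofinal family of sufficiently large real $R$ defines the same topology, giving the second assertion. I expect the main obstacle to be the careful identification of $\h{U(\pi^n\L)}$ in PBW coordinates: one must control convergence of the formal series, match the $\pi$-adic convergence condition before tensoring with $K$ against the growth condition afterwards, and justify the identification via the compatibility of the $\pi$-adic and supremum-seminorm topologies on $\O(X)^\circ$.
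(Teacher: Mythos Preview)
Your argument has a genuine gap: the assertion that each $\partial_i$ preserves $\O(X)^\circ$ is not true in general, and the justification you give (``a standard consequence of \'etaleness over $\mathbb{D}^d$'') does not hold. \'Etaleness only tells you that $\{\partial_i\}$ is an $\O(X)$-basis for $\T(X)$; it says nothing about integrality of the action. For a concrete counterexample, take $d=1$ and let $g : \Sp K\langle y\rangle \to \Sp K\langle x\rangle$ be the open immersion onto the closed subdisc of radius $|\pi|$, so $g^\#(x) = \pi y$. This is \'etale, yet the lift of $d/dx$ is $\pi^{-1}\partial_y$, which sends $y \in \O(X)^\circ$ to $\pi^{-1} \notin \O(X)^\circ$. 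Consequently your $\L$ need not be a $(K^\circ,\O(X)^\circ)$-Lie algebra: the anchor condition $[\L,\O(X)^\circ]\subseteq \O(X)^\circ$ fails, so $U(\L)$ is not available as an enveloping algebra over $\O(X)^\circ$, and neither the PBW step nor the $\pi$-adic completion step makes sense as written.

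The paper handles this exactly by not assuming integrality: since each $\partial_i$ is bounded (Lemma~\ref{BddDers}), there is an $N \geq 0$ with $\pi^N\partial_i \cdot \O(X)^\circ \subseteq \O(X)^\circ$ for all $i$, so $\pi^N\L$ (rather than $\L$) is a genuine $(K^\circ,\O(X)^\circ)$-Lie lattice, free on $\pi^N\partial_1,\ldots,\pi^N\partial_d$. One then computes $\w\D(X) = \invlim_{r\geq N}\hK{U(\pi^r\L)}$. From that point on your PBW/completion/inverse-limit argument goes through essentially unchanged, with the cosmetic adjustment that the family of seminorms is indexed by $r\geq N$ (hence ``sufficiently large $R$'' in the statement) rather than all $r\geq 0$; this is also why Remark~\ref{rmk:Equiv}(b) is phrased with an $r_0$.
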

\begin{proof} Since $g : X \to \mathbb{D}^d$ is \'etale, $\{\partial_1, \dots, \partial_d \}$ forms an $\O(X)$-module basis for $\T(X)$. They need not preserve $\A:= \O(X)^\circ$ in general, but because they are bounded by Lemma \ref{BddDers}, we can find $N$ large enough so that the $\pi^{N}\partial_i$ do all preserve $\A$. Define $\L$ as the $\A$-submodule of $\T(X)$  generated by the bounded derivations $\{\partial_1, \dots, \partial_d \}$. Then $[\pi^N \L, \A] \subseteq \A$ by construction so we find that $\pi^N \L$ is a $(K^\circ,\A)$-Lie algebra which is in addition free of finite rank as an $\A$-module. Therefore, by \cite[Definition 9.3, Theorem 9.3, Definition 8.1 and Definition 6.2]{DCapOne}, we find that $\w\D(X)$ is canonically isomorphic to the inverse limit (over $r \geq N$) of the Banach algebras $\hK{ U( \pi^r \L) }$. 

Now, for any $r \geq N$, $\gr U(\pi^r \L) = S( \pi^r \L)$ by a theorem of Rinehart --- \cite[Theorem 3.1]{Rinehart} --- so $\gr U(\pi^r \L)$ is the commutative polynomial ring in $n$ variables over $\A$. These variables ${\zeta^{(r)}_1,...,\zeta^{(r)}_d}$ are compatible in the sense that the obvious (injective) map $S( \pi^s \L) \to S( \pi^r \L)$ for $s \geq r$ will send $\zeta^{(s)}_i$ to $\pi^{s-r} \zeta^{(r)}_i$ for any $i=1,...,n$. It follows that $U(\pi^r \L)$ consists of non-commutative polynomials in the $\pi^r \partial_i$ over $\A$. More precisely, every element of $U(\pi^r \L)$ can be written uniquely as a finite sum $\sum_{\alpha \in \mathbb{N}^d} a_\alpha (\pi^r \partial_1)^{\alpha_1}\cdots(\pi^r \partial_d)^{\alpha_d}$ with the $a_\alpha \in \A$. It also follows from this that the Banach completion $\hK{U(\pi^r \L)}$ is the non-commutative Tate-algebra over $\O(X) = \A_K$ in these variables: every element of $\hK{U(\pi^r \L)}$ can be written uniquely as a convergent power series $\sum_{\alpha \in \mathbb{N}^d} a_\alpha (\pi^r \partial_1)^{\alpha_1} ... (\pi^r \partial_d)^{\alpha_d}$ with the $a_\alpha$ now in $\O(X)$, tending to zero as $|\alpha| \to \infty$. By rewriting these sums as formal power series in $\partial_1,...,\partial_d$, we obtain the statement of the lemma.
\end{proof}

Whenever $V$ is an affinoid subdomain of a $K$-affinoid variety $U$, $\O(V)$ is naturally an abstract $\D(U)$-module. Because $\D(U)$ is generated as a $K$-algebra by $\O(U)$ and $\T(U)$, and because $\T(U)$ acts by \emph{bounded} derivations on $\O(U)$ by Lemma \ref{BddDers}, we thus obtain a $K$-algebra homomorphism
\[ \D(U) \longrightarrow \End_K \O(V).\]

\begin{lem}\label{DCapAction} Let $U$ be a smooth $K$-affinoid variety, and let $V$ be an affinoid subdomain of $U$. Then the $K$-algebra homomorphism $\D(U) \to \End_K \O(V)$ extends uniquely to a continuous $K$-algebra homomorphism
\[ \rho(U,V) : \w\D(U) \to \End_K \O(V)\]
such that, for any $W \in V_w$ and $a \in \w\D(U)$, the following diagram commutes:
\[ \xymatrix{  \O(V) \ar[d]\ar[rr]^{\rho(U,V)(a)} && \O(V) \ar[d] \\ \O(W) \ar[rr]_{\rho(U,W)(a)} && \O(W). }\]
\end{lem}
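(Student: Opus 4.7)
The plan is to factor $\rho(U,V)$ through one of the Banach algebras $\hK{U(\pi^n\L)}$ appearing in the Fr\'echet presentation $\w\D(U) = \invlim_n \hK{U(\pi^n\L)}$, exploiting the fact that $\D(U)$ is dense in $\w\D(U)$.

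First I would pick an $\O(U)^\circ$-Lie lattice $\L \subseteq \T(U)$ with a finite $\A$-module generating set $\xi_1,\dots,\xi_k$, where $\A := \O(U)^\circ$. Because the supremum seminorm is contractive under restriction to affinoid subdomains, the restriction map sends $\A$ into $\O(V)^\circ$. Each $\xi_i$ is a bounded $K$-linear derivation of $\O(U)$ by Lemma \ref{BddDers}, and extends canonically to a $K$-linear derivation of $\O(V)$ via the sheaf structure of $\T$ on $V_w$; this extension is again bounded by Lemma \ref{BddDers} applied to the affinoid $V$. Finiteness of the generating set lets me choose $n$ so large that every $\pi^n\xi_i$ preserves $\O(V)^\circ$, and then every $\A$-linear combination does too. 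Together with the multiplication action of $\A$ on $\O(V)^\circ$, I obtain a $K^\circ$-algebra homomorphism
\[
U(\pi^n\L) \longrightarrow \End_{K^\circ}\O(V)^\circ
\]
that recovers (after inverting $\pi$) the natural $\D(U)$-action on $\O(V)$.

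Since $\O(V)$ is a $K$-Banach space, $\O(V)^\circ$ is $\pi$-adically complete and separated, hence so is $\End_{K^\circ}\O(V)^\circ$. Passing to the $\pi$-adic completion yields $\h{U(\pi^n\L)} \to \End_{K^\circ}\O(V)^\circ$; inverting $\pi$ gives a continuous $K$-algebra homomorphism $\hK{U(\pi^n\L)} \to \End_K\O(V)$ whose image lies in the bounded endomorphisms. Precomposing with the Fr\'echet projection $\w\D(U) \to \hK{U(\pi^n\L)}$ defines the desired $\rho(U,V)$, and continuity is built in. Uniqueness is immediate from the density of $\D(U)$ in $\w\D(U)$. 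The commutativity of the diagram for $W \in V_w$ is clear for $a \in \D(U)$, by the functoriality of the $\D$-module structure of the sheaf $\O$, and extends to $a \in \w\D(U)$ by density of $\D(U)$ in $\w\D(U)$ and continuity of both $\rho(U,V)$ and $\rho(U,W)$.

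The step requiring the most care is the first one: ensuring that a single power $\pi^n$ is enough to force \emph{every} element of the $\A$-module $\L$ to preserve $\O(V)^\circ$, and that the resulting $K^\circ$-algebra action really does extend across $\pi$-adic completion. The explicit description of $\w\D(U)$ from Lemma \ref{lem:symbol}, available after choosing \'etale coordinates, provides a useful sanity check: the candidate formula $\rho(U,V)\bigl(\sum a_\alpha\partial^\alpha\bigr)(f) = \sum (a_\alpha|_V)\,\partial^\alpha(f)$ converges in $\O(V)$ by the rapid-decrease condition combined with an operator bound for the $\partial_i$ on $\O(V)$. Neither this check nor the construction above requires such coordinates globally.
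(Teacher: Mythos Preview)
Your proof is correct and follows the same strategy as the paper: find a Lie lattice in $\T(U)$ whose action preserves $\O(V)^\circ$, pass to the $\pi$-adic completion, invert $\pi$, and precompose with the canonical map from $\w\D(U)$. The only cosmetic differences are that the paper cites \cite[Lemma~7.6(b)]{DCapOne} to obtain such a lattice directly (rather than scaling an arbitrary Lie lattice by $\pi^n$ as you do), and establishes the commutativity of the diagram by choosing a single $\L$ stabilising both $\O(V)^\circ$ and $\O(W)^\circ$ rather than by invoking density of $\D(U)$.
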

\begin{proof} 
The operator norm on $\End_K \O(V)$ is given by
\[ ||\phi|| = \sup \left\{ \frac{|\phi(f)|_V}{|f|_V} : f \in \O(V) \backslash \{0\}\right\}.\]
Because the unit ball of $\O(V)$ with respect to the supremum norm is $\O(V)^\circ$, the unit ball in $\End_K \O(V)$ with respect to the operator norm is 
\[\Hom_{K^\circ}( \O(V)^\circ, \O(V)^\circ ),\]
the space of \emph{all} $K^\circ$-linear maps $\O(V)^\circ \to \O(V)^\circ$. Note that because every open neighbourhood of zero in $\O(V)^\circ$ contains a $\pi$-power multiple of $\O(V)^\circ$, \emph{every} $K^\circ$-linear map $\O(V)^\circ \to \O(V)^\circ$ is automatically continuous. 

Now, using the proof of \cite[Lemma 7.6(b)]{DCapOne}, we can find an $\O(U)^\circ$-Lie lattice $\L$ in $\T(U)$ whose action on $\O(V)$ stabilises $\O(V)^\circ$. Because $\O(V)^\circ$ is $\pi$-adically complete, it follows that the induced action of $U(\L)$ on $\O(V)^\circ$ extends uniquely to the $\pi$-adic completion $\h{U(\L)}$. After tensoring over $K^\circ$ with $K$, we obtain in this way a norm-preserving $K$-Banach algebra homomorphism
\[ \hK{U(\L)} \longrightarrow \End_K \O(V),\]
which extends $\D(U) \to \End_K \O(V)$. We now define
\[\rho(U,V) : \w\D(U) \to \End_K \O(V)\]
to be the restriction of this homomorphism to $\w\D(U)$ along the canonical map $\w\D(U) \to \hK{U(\L)}$.  Because the set of $\O(U)^\circ$-Lie lattices stabilising $\O(V)^\circ$ is stable under intersections, it is easy to see that $\rho(U,V)$ does not, in fact, depend on the choice of $\L$. 

Finally, choose an $\O(U)^\circ$-Lie lattice $\L$ in $\T(U)$ which stabilises \emph{both} $\O(V)^\circ$ and $\O(W)^\circ$; then the restriction map $\O(V)^\circ \to \O(W)^\circ$ becomes $\h{U(\L)}$-linear. Therefore, the restriction map $\O(V) \to \O(W)$ is $\w\D(U)$-linear and the diagram in the statement of the Lemma commutes.\end{proof}

\begin{lem}\label{DcapAndEareSheaves} Let $X$ be a smooth rigid $K$-analytic space. Then $\w\D_X$ and $\E_X$ are sheaves of $K$-algebras on $X_w$.
\end{lem}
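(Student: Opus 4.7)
The plan is to handle the two sheaf properties separately. For $\w\D_X$, the result is essentially established in \cite{DCapOne}; indeed the paragraph immediately preceding this lemma already asserts that $\w\D$ sheafifies to a sheaf of $K$-Fr\'echet algebras on $X_w$, the necessary results from \cite{DCapOne} holding in the non-discretely-valued setting as noted at the start of this section. I would simply invoke this. So the only substantive task is the sheaf property of $\E_X$.

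For $\E_X$, I would verify the sheaf axiom directly from the definitions. Unwinding the definition in Section \ref{BornIntro}, a section $\psi \in \E_X(U)$ is a compatible family $(\psi_V)_{V \in U_w}$ of bounded $K$-linear endomorphisms $\psi_V \colon \O(V) \to \O(V)$ commuting with the restriction maps of $\O$. Given an admissible cover $\{U_i\}$ of $U$ and sections $\psi^{(i)} \in \E_X(U_i)$ agreeing on overlaps, one must produce a unique $\psi \in \E_X(U)$ restricting to $\psi^{(i)}$ on each $U_i$. For each $V \in U_w$ and $f \in \O(V)$, the collection $\{V \cap U_i\}$ is an admissible cover of $V$, and the elements $\psi^{(i)}_{V \cap U_i}(f|_{V \cap U_i}) \in \O(V \cap U_i)$ agree on the overlaps $V \cap U_i \cap U_j$ by the compatibility hypothesis. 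By the sheaf property of $\O$, they glue uniquely to $\psi_V(f) \in \O(V)$. This defines $\psi_V$; $K$-linearity, compatibility with restrictions, uniqueness of $\psi$, and preservation of composition (and hence of the $K$-algebra structure) all follow formally from the analogous properties of the $\psi^{(i)}$ together with the sheaf property of $\O$.

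The main content is boundedness. Because $V$ is quasi-compact, the cover $\{V \cap U_i\}$ admits a finite refinement, say by affinoids $\{V_k\}$ with $V_k \subseteq U_{i_k}$. For any $g \in \O(V)$, the supremum seminorm satisfies $|g|_V = \max_k |g|_{V_k}$; applied to $g = \psi_V(f)$ together with $\psi_V(f)|_{V_k} = \psi^{(i_k)}_{V_k}(f|_{V_k})$, this yields
\[ |\psi_V(f)|_V \;\le\; \Bigl(\max_k \|\psi^{(i_k)}_{V_k}\|\Bigr) \cdot |f|_V, \]
so $\psi_V$ is a bounded $K$-linear endomorphism of $\O(V)$. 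The hard part is precisely this boundedness step: the gluing is routine once $\O$ is a sheaf, but promoting the glued $K$-linear map to a \emph{bounded} one requires the maximum-modulus identity on a finite admissible affinoid cover, which is ultimately a consequence of the quasi-compactness of $V$.
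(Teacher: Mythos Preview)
Your proposal is correct and follows the same approach as the paper. For $\w\D_X$ you both invoke \cite{DCapOne} (the paper cites Theorem 8.1 specifically and notes the proof carries over verbatim to non-discretely-valued $K$); for $\E_X$ the paper simply points back to the discussion in \S\ref{ShvBan}, whereas you spell out the standard ``internal $\mathcal{H}om$ of sheaves is a sheaf'' argument together with the one non-formal point, namely the boundedness of the glued map $\psi_V$ via the identity $|g|_V = \max_k |g|_{V_k}$ on a finite affinoid subcover. Your added detail is sound and is exactly what the paper's terse reference leaves implicit.
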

\begin{proof} The assertion about $\w\D_X$ was established in \cite[Theorem 8.1]{DCapOne} in the case where the ground field $K$ is discretely valued. However, the proof does not use this assumption on $K$ and therefore works in the stated generality. The discussion in $\S \ref{ShvBan}$ shows that $\E_X$ is also a sheaf on $X_w$.\end{proof}

\begin{cor} For any smooth rigid $K$-analytic space $X$, there is a homomorphism of sheaves of $K$-algebras on $X_w$
\[ \rho_X: \w\D_X \to \E_X\]
such that if $U \in X_w$, the $K$-algebra map $\rho_X(U) : \w\D(U) \to \E(U)$ is given by
\[ \rho_X(U)(a) = ( \rho(U,V)(a) )_{V \in U_w}.\]
\end{cor}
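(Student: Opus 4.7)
The plan is to verify three things in sequence: (a) that for each $U \in X_w$ and each $a \in \w\D(U)$, the tuple $(\rho(U,V)(a))_{V \in U_w}$ really is an element of $\E(U)$; (b) that the resulting assignment $\rho_X(U) : \w\D(U) \to \E(U)$ is a $K$-algebra homomorphism; and (c) that the family $(\rho_X(U))_{U \in X_w}$ is compatible with the $X_w$-restriction maps, so that it defines a morphism of sheaves of $K$-algebras.

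For (a), recall from $\S\ref{BornIntro}$ that the underlying set of $\E(U) = \underline{\mathcal{E}nd}_K(\O_X)(U) = \underline{\Hom}_{U_w}(\O_X|_U, \O_X|_U)$ consists exactly of tuples of bounded $K$-linear endomorphisms indexed by $V \in U_w$ which are compatible with the structure-sheaf restriction maps. Each component $\rho(U,V)(a)$ lies in $\End_K \O(V)$ by Lemma \ref{DCapAction}, and is in fact bounded since it arises as the image of $a$ under a $K$-Banach algebra homomorphism $\hK{U(\L)} \to \End_K\O(V)$ produced in the proof of that lemma. The compatibility under restriction is exactly the commutative diagram stated in Lemma \ref{DCapAction}. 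Hence the tuple belongs to $\E(U)$. Step (b) is then immediate, because addition and multiplication in $\E(U)$ are defined componentwise and each $\rho(U,V)$ is already a $K$-algebra homomorphism.

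The only real work is step (c). Given an inclusion $U' \subset U$ in $X_w$ and any $V \in U'_w \subset U_w$, I need to show that
\[ \rho(U',V)(a|_{U'}) = \rho(U,V)(a) \qmb{for every} a \in \w\D(U),\]
where $a|_{U'}$ denotes the image of $a$ under the restriction $\w\D(U) \to \w\D(U')$. Both sides of this identity, viewed as functions of $a$, are continuous $K$-algebra homomorphisms $\w\D(U) \to \End_K\O(V)$: the right-hand side directly by Lemma \ref{DCapAction}, the left-hand side because $\w\D(U) \to \w\D(U')$ is a continuous morphism of Fr\'echet algebras followed by $\rho(U',V)$. When restricted to the dense subalgebra $\D(U) \subset \w\D(U)$, both homomorphisms coincide with the natural action of $\D(U)$ on $\O(V)$ (the composite $\D(U) \to \D(U') \to \End_K\O(V)$ equals the original action since derivations and multiplications restrict compatibly). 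Invoking the uniqueness assertion in Lemma \ref{DCapAction}, the two continuous extensions must agree, which gives (c). The main obstacle is this compatibility step, and it reduces cleanly to the uniqueness of continuous extensions established in Lemma \ref{DCapAction}.
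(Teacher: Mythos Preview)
Your proof is correct and follows essentially the same outline as the paper's: both verify that the tuple $(\rho(U,V)(a))_{V\in U_w}$ lies in $\ker(r)=\E(U)$ via the commutative diagram in Lemma \ref{DCapAction}, and both observe that $\rho_X(U)$ is a $K$-algebra map componentwise.

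The one difference is in your step (c). The paper dispatches compatibility with restriction in $\w\D$ by citing the construction of those restriction maps in \cite[\S 6.3]{DCapOne}. You instead argue intrinsically: both $\rho(U,V)$ and $\rho(U',V)\circ(\text{restriction})$ are continuous $K$-algebra maps $\w\D(U)\to\End_K\O(V)$ agreeing on the dense subalgebra $\D(U)$, so the uniqueness clause of Lemma \ref{DCapAction} forces them to coincide. This is a legitimate and arguably more self-contained route, since it avoids unpacking the explicit construction in \cite{DCapOne} and instead leverages only what has already been stated in the present paper.
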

\begin{proof} By Lemma \ref{DCapAction}, we see that for any $U \in X_w$ and any $a \in \w\D(U)$, the element $( \rho(U,V)(a) )_{V \in U_w}$ lies in the kernel of the map $r$ described in $\S \ref{BornIntro}$ and so gives an element in $\Hom_{U_w} (\O_U, \O_U)$. By the definitions given in $\S \ref{ShvBan}$ and $\S \ref{BornIntro}$, 
\[ \Hom_{U_w}( \O_U, \O_U ) = (\underline{\mathcal{E}nd}_K \O_X)(U) = \E_X(U),\]
so we have defined a $K$-algebra homomorphism $\rho_X(U) : \w\D(U) \to \E_X(U)$. The construction of the restriction maps in the sheaf $\w\D$ explained in \cite[\S 6.3]{DCapOne} implies that these $\rho_X(U)$ commute with the restriction maps in the sheaves $\w\D_X$ and $\E_X$.
\end{proof}
Here is our main theorem. The proof will appear in Propositions \ref{btoa} and \ref{atob}.
\begin{thm}\label{Main} Let $K$ be a field of characteristic zero, complete with respect to a non-trivial non-Archimedean valuation. Then the following are equivalent:
\be\item The homomorphism of sheaves of $K$-algebras on $X_w$
\[\rho_X : \w\D_X \to \E_X\]
is an isomorphism for all smooth rigid $K$-analytic spaces $X$,
\item The ground field $K$ is algebraically closed, and its residue field $k$ is uncountable.
\ee\end{thm}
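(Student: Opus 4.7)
The biconditional splits into necessity and sufficiency; I tackle the deeper direction (b) $\Rightarrow$ (a) first. Since $\w\D_X$ and $\E_X$ are sheaves on $X_w$ by Lemma \ref{DcapAndEareSheaves} and $\rho_X$ is a morphism of sheaves, the assertion is local on $X_w$. Every smooth rigid space is covered by affinoid subdomains $U$ admitting an \'etale map $g: U \to \mathbb{D}^d$, so I may fix the coordinates $y_i = g^*(x_i)$ and derivations $\partial_i$ as in Lemma \ref{lem:symbol}; in these coordinates, $\w\D(U)$ consists of the rapidly decreasing formal series $\sum_\alpha a_\alpha \partial^\alpha$ with $a_\alpha \in \O(U)$.

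Injectivity of $\rho_X(U)$ is immediate: if $a = \sum a_\alpha \partial^\alpha$ acts as zero on $\O(U)$, applying $a$ to the monomials $y^\beta$ and using $\partial^\alpha(y^\beta) = \frac{\beta!}{(\beta - \alpha)!} y^{\beta - \alpha}$ forces all $a_\alpha = 0$ by induction on $|\alpha|$. For surjectivity, given $\phi \in \E(U)$, I construct a candidate symbol by setting $a_\alpha(z) := \frac{1}{\alpha!}\phi_V(t^\alpha)(z)$ at each $K$-point $z \in U(K)$, where $t_i := y_i - y_i(z)$ and $V$ is a small polydisc neighborhood of $z$. Algebraic closure of $K$ supplies enough $K$-points for these values to determine rigid analytic functions $a_\alpha \in \O(U)$, and the naturality of $\phi$ across overlapping affinoids ensures both compatibility of the $a_\alpha$ on overlaps and the identity $\phi(f) = \sum a_\alpha \partial^\alpha(f)$ on each $\O(V)$.

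The main obstacle is showing $(a_\alpha)$ is rapidly decreasing, so that $\sum a_\alpha \partial^\alpha$ actually lies in $\w\D(U)$. Boundedness of $\phi|_V : \O(V) \to \O(V)$ yields local estimates on $|a_\alpha|_V$, but the constants depend on $V$ and must be upgraded to a uniform global bound of the form $\sup_\alpha |a_\alpha|_U \cdot |\pi|^{-r|\alpha|} < \infty$ for each $r \in \mathbb{N}$. I would accomplish this by writing $U(K)$ as the countable union of the closed sets
\[ F_{N,r} := \Bigl\{ z \in U(K) : \sup_{\alpha \in \mathbb{N}^d} |a_\alpha(z)| \cdot |\pi|^{-r|\alpha|} \leq N \Bigr\}, \qquad N \in \mathbb{N}, \]
and invoking the Baire category theorem: uncountability of the residue field $k$ ensures that $U(K)$, in its canonical topology coming from the reduction, is non-meager, so some $F_{N,r}$ contains an open disc. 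A uniform estimate on that disc then propagates to $U$ by a standard covering argument, yielding the required rapid decay.

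For the converse (a) $\Rightarrow$ (b), I argue by contrapositive, constructing a $\phi \in \E(\mathbb{D}^1)$ outside the image of $\rho$ when either hypothesis on $K$ fails. If $K$ is not algebraically closed, fix $\alpha \in \overline{K} \setminus K$ with $|\alpha| \leq 1$ and a $K$-linear retraction $r: K(\alpha) \to K$ that is not $K(\alpha)$-linear; then the bounded endomorphism $\phi(f) := r(f(\alpha)) \in K \subset \O(\mathbb{D}^1)$ cannot come from $\w\D(\mathbb{D}^1)$, since any infinite-order differential operator extends $K(\alpha)$-linearly to $\O(\mathbb{D}^1) \otimes_K K(\alpha)$, which would force $r$ itself to be $K(\alpha)$-linear. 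If $k$ is countable, enumerate a countable family of $K$-points $z_n \in \mathbb{D}^1$ in pairwise disjoint small discs and build $\phi$ as a sum of rapidly growing scalar multiples of evaluation at each $z_n$ supported on the corresponding disc; this is bounded on each affinoid subdomain (only finitely many discs meet a given affinoid) but the resulting total symbol fails the rapid decay condition, so $\phi \notin \w\D(\mathbb{D}^1)$.
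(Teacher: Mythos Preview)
Your argument for (b) $\Rightarrow$ (a) has the right architecture and your symbol $a_\alpha$ agrees (via Lemma~\ref{lem:invariant}) with the paper's $\eta_\alpha(\varphi)$, but the Baire-category step contains a genuine gap. Baire gives you a disc $D \subset U$ on which $\sup_\alpha |a_\alpha(z)|\,|\pi|^{-r|\alpha|}$ is bounded, i.e.\ $|a_\alpha|_D \leq N|\pi|^{r|\alpha|}$; however, this says nothing about $|a_\alpha|_U$, and there is no ``standard covering argument'' that promotes a sup-norm bound on a subdisc to one on the ambient affinoid --- the $a_\alpha$ could be large on $U$ and happen to be small on $D$. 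The paper avoids this by arguing in the opposite direction: using that $k$ is uncountable and algebraically closed, Corollary~\ref{cor:cExists} produces a \emph{single} point $c \in X(K)$ at which $|a_\alpha(c)| = |a_\alpha|_X$ for \emph{all} $\alpha$ simultaneously (by avoiding the countably many hypersurfaces in the reduction where some $|a_\alpha|$ drops). One then shrinks to polydiscs $X_n$ of radius $|\pi|^n$ around $c$ and reads off rapid decay from the single bound $|a_\alpha|_X = |a_\alpha(c)| \leq |a_\alpha|_{X_n} \leq \|\varphi(X_n)\|\cdot |\pi|^{n|\alpha|}/|\alpha!|$; see Theorem~\ref{thm:DefFun}. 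So the uncountability of $k$ is used not through non-meagreness but through this ``simultaneous maximum modulus'' lemma.

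Your treatment of (a) $\Rightarrow$ (b) is also flawed in both branches. For $K$ not algebraically closed, your $\phi(f) = r(f(\alpha))$ is only a bounded endomorphism of the single Banach space $\O(\mathbb{D}^1)$; it is \emph{not} an element of $\E(\mathbb{D}^1)$, which by definition is a compatible family $(\phi_V)_{V \in (\mathbb{D}^1)_w}$. If $V$ is an affinoid subdomain not containing the point of $\mathbb{D}^1$ corresponding to $\alpha$, evaluation at $\alpha$ does not factor through $\O(V)$, so your $\phi$ does not extend to a sheaf morphism. The paper sidesteps this entirely by taking $X = \Sp L$ for a nontrivial finite extension $L/K$: then $X$ is zero-dimensional, $\w\D(X) = L$, $\E(X) = \End_K(L)$, and $\rho_X$ is the regular representation, which is not surjective. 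For countable $k$, your sketch asserts that ``only finitely many discs meet a given affinoid,'' but $\mathbb{D}^1$ itself is affinoid and meets all of your discs, so the boundedness claim fails immediately; moreover, ``evaluation supported on a disc'' does not define a sheaf endomorphism of $\O$. The paper's construction (Claims~\ref{Claim1} and~\ref{XiNotInDCap}) is more delicate: one enumerates coset representatives $\lambda_\alpha$ of $\mathfrak{m}$ in $K^\circ$ and takes $\xi = \sum_\alpha \xi_\alpha \pi^\alpha \partial_x^{(\alpha)}$ with $\xi_\alpha = \prod_{\beta \leq \alpha}(x-\lambda_\beta)^{\alpha^2}$. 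On any proper subdisc one factor $(x - \lambda_\gamma)$ has norm $<1$, and the exponent $\alpha^2$ kills the growth of $\partial_x^{(\alpha)}$, so $\xi$ converges in every $\End_K \O(Y)$; but $|\xi_\alpha|_{\mathbb{D}^1} = 1$, so the family is not rapidly decreasing and $\xi \notin \w\D(\mathbb{D}^1)$.
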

\begin{rmk}\label{rmk:resalg} \be \item If $\Char K = p > 0$ then $\rho_X$ need not be injective: for example, $\rho_{\mathbb{D}}$ annihilates $\partial^p$ where $\partial := \frac{d}{dx} \in \T(\mathbb{D})$, but $\partial^p$ is non-zero in $\w\D(\mathbb{D})$. This is why we restrict to the case $\Char K = 0$ throughout.

\item The assumption that $K$ is algebraically closed automatically forces its residue field $k$ to be algebraically closed as well. This is because any root of any monic lift of a monic non-constant polynomial in $k[X]$ to $K^\circ[X]$ already lies in $K^\circ$ because $K$ is algebraically closed and $K^\circ$ is integrally closed, so its image in $k$ is a root of our original polynomial.
\item The condition in Theorem \ref{Main} (b) that $K$ is algebraically closed will be used directly in Lemma \ref{lem:dense}. The implication that its residue field is algebraically closed will be used in Corollary \ref{cor:cExists}, Corollary \ref{EtaCor}, Lemma \ref{EtaRho}, and Theorem \ref{thm:DefFun}.
\ee\end{rmk}

\section{Proof of Theorem \ref{Main}}
We begin the proof of Theorem \ref{Main} with four general statements that should be skipped on a first reading.

\begin{lem}\label{FaithfulFlats}
Let $A$ be a domain and let $M$ be a non-zero flat $A$-module. Then $\Ann_A(M) = 0$.
\end{lem}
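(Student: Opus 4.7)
The plan is a short, standard flatness argument. Suppose for contradiction that $a \in \Ann_A(M)$ is nonzero. Since $A$ is a domain, the multiplication-by-$a$ map $\mu_a : A \to A$ is injective. Flatness of $M$ preserves injectivity, so $\mu_a \otimes_A \id_M : A \otimes_A M \to A \otimes_A M$ is injective as well. Under the canonical identification $A \otimes_A M \cong M$, this map is just multiplication by $a$ on $M$, which is the zero map by hypothesis. Hence $M = 0$, contradicting the assumption that $M$ is nonzero, so $a = 0$.

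There is no real obstacle here; the only thing to be careful about is making sure the identification of $\mu_a \otimes \id_M$ with scalar multiplication by $a$ on $M$ is spelled out (this is immediate from functoriality of the isomorphism $A \otimes_A M \cong M$). No assumption on $A$ beyond being a domain is needed, and the argument uses flatness of $M$ in exactly one place.
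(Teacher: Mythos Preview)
Your proof is correct and follows essentially the same approach as the paper: both use that multiplication by a nonzero $a$ is injective on $A$, tensor with the flat module $M$ to see that multiplication by $a$ is injective on $M$, and conclude that $aM \neq 0$ since $M \neq 0$. The only difference is cosmetic---you argue by contradiction while the paper shows directly that any nonzero $a$ lies outside $\Ann_A(M)$.
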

\begin{proof}
Suppose $a \in A$ is non-zero. Then the multiplication-by-$a$ map $a_A : A \to A$ is injective because $A$ is a domain. Since $M$ is flat, the multiplication-by-$a$ map $a_M : M \to M$ is injective. Since $M$ is non-zero, $a_M(M)$ is non-zero. Hence $a \notin \Ann_A(M).$ \end{proof}

\begin{prop}\label{InjRestrs}
Let $Z$ be a connected, smooth, $K$-affinoid variety, and let $Y$ be a non-empty affinoid subdomain of $Z$. Then the restriction map $\O(Z) \to \O(Y)$ is injective.
\end{prop}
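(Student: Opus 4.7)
The plan is to combine the algebraic structure of $\O(Z)$ inherited from $Z$ being smooth and connected, the flatness of $\O(Y)$ over $\O(Z)$, and the preceding Lemma \ref{FaithfulFlats}.

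First I would show that $\O(Z)$ is an integral domain. Because $Z$ is smooth, $\O(Z)$ is a regular Noetherian ring. In any regular Noetherian ring, two distinct minimal primes $\mathfrak{p}_i,\mathfrak{p}_j$ cannot share a common maximal ideal $\mathfrak{m}$: if they did, the localisation $\O(Z)_{\mathfrak{m}}$ would be a regular local ring, hence a domain, but would contain the two distinct minimal primes coming from $\mathfrak{p}_i$ and $\mathfrak{p}_j$, which is absurd. Therefore the finitely many minimal primes of $\O(Z)$ are pairwise coprime, and the Chinese Remainder Theorem exhibits $\O(Z)$ as a finite product of integral domains. Connectedness of $Z$ translates into the absence of non-trivial idempotents in $\O(Z)$, so this product must consist of a single factor.

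Second, I would invoke the standard fact from the theory of affinoid subdomains (see e.g.\ \cite{FvdPut}) that the restriction map $r : \O(Z) \to \O(Y)$ is flat, making $\O(Y)$ into a flat $\O(Z)$-module. Since $Y$ is non-empty, $\O(Y)$ is non-zero.

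Third, I would observe that $\ker(r) \subseteq \Ann_{\O(Z)}(\O(Y))$: any element $a \in \ker(r)$ acts on $m \in \O(Y)$ by multiplication by $r(a)=0$, so $am=0$. Applying Lemma \ref{FaithfulFlats} with $A = \O(Z)$ (a domain) and $M = \O(Y)$ (non-zero and flat) shows that this annihilator vanishes, whence $\ker(r)=0$ and $r$ is injective.

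The only non-formal ingredient is the reduction from ``smooth plus connected'' to ``$\O(Z)$ is a domain''; once this piece of commutative algebra is in hand, the argument is immediate from Lemma \ref{FaithfulFlats} and the flatness of affinoid localisations.
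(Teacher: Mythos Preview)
Your proof is correct and follows essentially the same approach as the paper: establish that $\O(Z)$ is a domain (the paper cites \cite{BLR3}, \cite{BGR}, \cite{AMac}, and \cite{KapCommRings} for this, whereas you give a direct argument via pairwise coprimality of minimal primes and the Chinese Remainder Theorem), invoke flatness of $\O(Y)$ over $\O(Z)$, and apply Lemma~\ref{FaithfulFlats}. The only cosmetic difference is that the paper observes the kernel of $r$ \emph{equals} $\Ann_{\O(Z)}(\O(Y))$, while you note only the inclusion $\ker(r)\subseteq \Ann_{\O(Z)}(\O(Y))$, which is all that is needed.
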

\begin{proof}
Since $Z$ is smooth, the local rings $\O_{Z,z}$ of its structure sheaf are geometrically regular by \cite[Lemma 2.8]{BLR3}. By \cite[Proposition 7.3.2/8]{BGR} we know that the (usual) localisations $\O(Z)_{\mathfrak{m}}$ at all maximal ideals $\mathfrak{m}$ of $\O(Z)$ are regular local rings. Hence these local rings are domains by \cite[Theorem 11.22 and Lemma 11.23]{AMac}. Since $\O(Z)$ is Noetherian and since $Z$ is connected, it follows from \cite[Theorem 168]{KapCommRings} that $\O(Z)$ is a domain.

Since $Y$ is non-empty, $\O(Y)$ is non-zero. On the other hand $\O(Y)$ is a flat $\O(Z)$-module (in the purely algebraic sense) by \cite[Corollary 7.3.2/6]{BGR}. Since $\O(Z)$ is a domain, it follows from Lemma \ref{FaithfulFlats} that $\Ann_{\O(Z)} \O(Y) = 0$. But this annihilator is precisely the kernel of the map $\O(Z) \to \O(Y)$, and the result follows. \end{proof}

\begin{lem}\label{lem:tExists}
Let $Y$ be an irreducible, reduced, affine variety of finite type over an uncountable, algebraically closed field $k$, and let $a_0, a_1, a_2, \ldots$ be a sequence of non-zero elements of $\O(Y)$. Then there is $t \in Y(k)$ such that $a_i(t) \neq 0$ for all $i \geq 0$.
\end{lem}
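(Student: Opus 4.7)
The plan is to use Noether normalization to reduce to the case $Y = \mathbb{A}^d_k$, then settle that case by induction on $d$ using the uncountability of $k$. First I would choose a finite injection $A := k[x_1,\ldots,x_d] \hookrightarrow \O(Y)$, where $d = \dim Y$, corresponding to a finite surjective morphism $\pi \colon Y \to \mathbb{A}^d_k$; since $k$ is algebraically closed, $\pi$ is surjective on $k$-points by the Nullstellensatz. Because $Y$ is irreducible and reduced, $\O(Y)$ is a domain, and for each $i$ the integral element $a_i$ over $A$ admits a monic relation $P_i(T) = T^{n_i} + c_{i, n_i - 1} T^{n_i - 1} + \cdots + c_{i, 0} \in A[T]$ of minimal degree. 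The constant term $c_{i, 0}$ is necessarily non-zero: if $P_i(T) = T \cdot Q_i(T)$ for some monic $Q_i$, then $a_i \cdot Q_i(a_i) = 0$ would force $Q_i(a_i) = 0$ (as $a_i \neq 0$ in the domain $\O(Y)$), contradicting the minimality of $n_i$.

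Having extracted these non-zero polynomials $c_{i,0} \in A$, the problem reduces to producing $t \in k^d = \mathbb{A}^d_k(k)$ such that $c_{i,0}(t) \neq 0$ for every $i$: for any preimage $s \in Y(k)$ of such a $t$ under $\pi$, the identity
\[ a_i(s)^{n_i} + c_{i, n_i - 1}(t) \, a_i(s)^{n_i - 1} + \cdots + c_{i, 0}(t) = 0 \]
in $k$ would force $c_{i, 0}(t) = 0$ if $a_i(s)$ vanished, contradicting the choice of $t$. So the $s$ thus produced would do the job.

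Finally, I would produce such a $t$ by induction on $d$. For $d = 1$, every non-zero element of $k[x_1]$ has only finitely many roots, so the set $\bigcup_i \{x \in k : c_{i,0}(x) = 0\}$ is a countable subset of the uncountable field $k$, which therefore has a point outside it. For $d > 1$, I would expand each $c_{i,0}$ as a polynomial in $x_d$ with coefficients in $k[x_1,\ldots,x_{d-1}]$ and pick a non-zero coefficient $\tilde c_i$; applying the inductive hypothesis to the sequence $(\tilde c_i)_{i \geq 0}$ yields $(t_1,\ldots,t_{d-1}) \in k^{d-1}$ making every $\tilde c_i(t_1,\ldots,t_{d-1})$ non-zero, so each specialisation $c_{i,0}(t_1,\ldots,t_{d-1}, x_d) \in k[x_d]$ is a non-zero univariate polynomial, and the $d=1$ case supplies $t_d$. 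The main obstacle is the verification that the constant term of the minimal polynomial is non-zero; once that is established, the rest is a clean bookkeeping induction powered by the uncountability hypothesis on $k$.
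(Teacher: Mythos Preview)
Your proof is correct and follows essentially the same strategy as the paper's: Noether normalization reduces the problem to $\mathbb{A}^d$, which is then handled by induction on $d$ using the uncountability of $k$. The only difference is cosmetic---where the paper picks an arbitrary non-zero element $b_i$ in the contracted ideal $(f^\sharp)^{-1}(a_i\,\O(Y))$, you make the explicit choice $b_i = c_{i,0}$, the constant term of a minimal monic relation for $a_i$ over $A$, which indeed lies in that ideal since $c_{i,0} = -a_i\bigl(a_i^{n_i-1} + c_{i,n_i-1}a_i^{n_i-2} + \cdots + c_{i,1}\bigr)$.
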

\begin{proof}
Suppose first that $Y$ is the affine $n$-space $\mathbb{A}^n$ so that $\O(Y)$ is the polynomial algebra $k[y_1,\ldots,y_n]$. Proceed by induction on $n$, the case $n = 0$ being true vacuously.

Let $a_{ij} \in k[y_1,\ldots,y_{n-1}]$ be the coefficient of $y_n^j$ in $a_i$. By induction, we can find a point $(t_1,\ldots,t_{n-1}) \in k^{n-1}$ such that $a_{ij}(t_1,\ldots,t_{n-1}) \neq 0$ whenever $a_{ij} \neq 0$. Now $a_i(t_1,\ldots,t_{n-1},y_n)$ is a non-zero polynomial in $k[y_n]$ for all $i$, so it has only finitely many roots in $k$. Since $k$ is uncountable, we can find $t_n \in k$ such that $a_i(t_1,\ldots,t_{n-1},t_n) \neq 0$ for all $i\geq 0$ as required.

In general, use the Noether normalisation lemma to find a finite surjective morphism $f : Y \to \mathbb{A}^n$. Since $Y$ is irreducible and reduced, $\O(Y)$ is a domain. Since $\O(Y)$ is a finitely generated $\O(\mathbb{A}^n)$-module via $f^\sharp : \O(\mathbb{A}^n) \to \O(Y)$, $\O(Y)$ is integral over $\O(\mathbb{A}^n)$. Hence, the ideal $(f^\sharp)^{-1}(a_i \O(Y))$ is non-zero for all $i\geq 0$. So we can choose a non-zero element $b_i$ in this ideal for all $i\geq 0: f^\sharp(b_i) = a_i g_i$ for some $g_i \in \O(Y)$.  

Using the first part, choose $t \in \mathbb{A}^n(k)$ such that $b_i(t) \neq 0$ for all $i \geq 0$. Since $k$ is algebraically closed, the fibre $f^{-1}(t)(k)$ is non-empty. Pick any $y \in f^{-1}(t)(k)$. Then $a_i(y) g_i(y) = f^\sharp(b_i)(y) = b_i(f(y)) = b_i(t) \neq 0$ for all $i \geq 0$. Hence $a_i(y) \neq 0$ for all $i \geq 0$. 
\end{proof}
\begin{cor}\label{cor:cExists}Suppose the residue field $k$ of $K$ is uncountable and algebraically closed. Let $X$ be a reduced $K$-affinoid variety whose reduction is irreducible, and let $f_0, f_1,... \in \O(X)$ be a sequence of non-zero functions in $\O(X)$. Then there exists $c \in X(K)$ such that $|f_i(c)| = |f_i|_X$ for all $i\geq 0$.
\end{cor}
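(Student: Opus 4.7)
The plan is to reduce to Lemma \ref{lem:tExists} by passing from $X$ to its canonical reduction. Set $\A := \O(X)^\circ$, $\A^{\circ\circ} := \{f \in \O(X) : |f|_X < 1\}$, and $\tilde\A := \A / \A^{\circ\circ}$, so that $\tilde X := \Spec \tilde\A$ is the canonical reduction of $X$, an affine $k$-scheme of finite type. Because the spectral seminorm on a reduced affinoid is power-multiplicative, $\tilde\A$ is reduced; combined with the irreducibility of $\tilde X$, this makes $\tilde\A$ an integral domain, so the hypotheses of Lemma \ref{lem:tExists} are met by $Y = \tilde X$.

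For each $i \geq 0$ I would choose $\lambda_i \in K^\times$ with $|\lambda_i| = |f_i|_X$ --- available in the setting of Theorem \ref{Main} because $K$ is algebraically closed, so $|K^\times|$ is divisible and every supremum norm on $\O(X)$ is attained at some $K$-valued point --- and set $g_i := f_i / \lambda_i \in \A$, so that $|g_i|_X = 1$ and the reduction $\tilde g_i \in \tilde\A$ is nonzero. Applying Lemma \ref{lem:tExists} to the integral domain $\tilde\A$ and the sequence of nonzero elements $(\tilde g_i)_{i \geq 0}$ then produces a point $t \in \tilde X(k)$ with $\tilde g_i(t) \neq 0$ for every $i \geq 0$.

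The final step is to lift $t$ to a $K$-point $c \in X(K)$ through the specialization map $X(K) \to \tilde X(k)$. Once such a $c$ is in hand, $g_i(c) \in K^\circ$ has nonzero image $\tilde g_i(t) \in k$, forcing $|g_i(c)| = 1$, and therefore $|f_i(c)| = |\lambda_i| \cdot |g_i(c)| = |f_i|_X$ for every $i$, as required.

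The principal obstacle is this lifting step: surjectivity of $X(K) \to \tilde X(k)$ is not automatic, but follows from the standard fact that over an algebraically closed complete non-Archimedean field, every $k$-rational point of the canonical reduction of a reduced affinoid is the specialization of a $K$-point of the rigid generic fibre. Both this surjectivity and the existence of the scaling scalars $\lambda_i$ depend on the algebraic closure of $K$ --- which is part of the operating hypothesis of Theorem \ref{Main} in which this corollary is invoked, even though only the residue field is mentioned in its statement.
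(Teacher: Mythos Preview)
Your argument is correct and follows the same three-step template as the paper: normalize each $f_i$ to sup-norm $1$, apply Lemma~\ref{lem:tExists} to the reduction, then lift the resulting $k$-point to a $K$-point. The one substantive difference lies in the normalization step: you divide $f_i$ by a scalar $\lambda_i$ with $|\lambda_i| = |f_i|_X$, which needs $|K^\times|$ divisible and hence $K$ algebraically closed, whereas the paper first raises $f_i$ to a suitable power $m_i$ so that $|f_i|_X^{m_i} \in |K^\times|$ (using only that sup-norms on a reduced affinoid lie in $\sqrt{|K^\times|}$) and then divides $f_i^{m_i}$ by a scalar $\mu_i$. Thus the paper's normalization avoids the hypothesis $K = \overline K$ at that stage --- but, as you rightly flag, the lifting from $\tilde X(k)$ to $X(K)$ genuinely requires it, and the paper passes over this point in silence (writing simply ``Choose a lift $c \in X(K)$ of $t \in Y(k)$''). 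Your version is therefore slightly less general in the normalization but more scrupulous about where algebraic closedness of $K$ actually enters; in the intended application to Theorem~\ref{Main} the two are equivalent.
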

\begin{proof}
Let $Y = \Spec (\O(X)^\circ / \O(X)^{\circ\circ})$ be the reduction of $X$; it satisfies the hypotheses of  Lemma \ref{lem:tExists}. Because $X$ is reduced, we can find integers $m_0,m_1, \ldots \geq 1$, and non-zero scalars $\mu_0, \mu_1, \ldots \in K$, such that $|f_i|_X = |\mu_i|^{1/{m_i}}$ for all $i \geq 0$. Thus $f_i^{m_i}/\mu_i$ is an element of $\O(X)$ whose supremum norm is precisely $1$, and therefore its image $a_i \in \O(Y)$ is non-zero. Using Lemma \ref{lem:tExists}, choose $t \in Y(k)$ such that $a_i(k) \neq 0$ for all $i \geq 0$. 

Choose a lift $c \in X(K)$ of $t \in Y(k)$. Thus $|f_i(c)|^{m_i} / |\mu_i| = 1$ for all $i \geq 0$. But $|f_i|_X^{m_i} = |\mu_i|$, so we see that $|f_i|_X = |f_i(c)|$ for all $i \geq 0$ as required.  \end{proof}

\textbf{We assume until the end of the proof of Proposition \ref{RhoEta} below that $X$ is a smooth affinoid $K$-variety equipped with an \'etale morphism 
\[g : X \to \mathbb{D}^d.\] 
We also assume throughout that $K$ has characteristic zero.}

Using \cite[Lemma 2.4]{DCapOne}, we lift the standard vector fields $\frac{d}{dx_1}, \ldots, \frac{d}{dx_n}\in \T(\mathbb{D}^d)$ along the \'etale morphism $g$ to obtain $\partial_{1}, \dots, \partial_d \in \T(X)$. Note that we have \[\mathcal{T}(X)=\mathcal{O}(X)\partial_1 \oplus\mathcal{O}(X)\partial_2 \oplus \cdots \oplus \mathcal{O}(X)\partial_d.
\]
For every $\alpha\in\mathbb{N}^d$ we write $\partial^{\alpha}= \partial_{1}^{\alpha_{1}}\cdots \partial_d^{\alpha_d}$, $\alpha!= \alpha_{1}!\cdots \alpha_d!$ and $\binom{\alpha}{\beta}= \binom{\alpha_{1}}{  \beta_{1}} \cdots \binom{\alpha_d}{ \beta_d}$.  We write $\alpha \leq \beta$ to mean that $\alpha_i\leq \beta_i$ for $i=1, \dots, d$, and we define $|\alpha|= \alpha_{1} + \cdots +\alpha_d$. For any $U\in X_w$ we will abuse notation and write $x^{\alpha}$ denote the image of $x^{\alpha}=x_1^{\alpha_1}\cdots x_d^{\alpha_d}$ in $\O(U)$ under the composition $\mathcal{O}(\mathbb{D}^d) \stackrel{g^{\#}}\to \mathcal{O}(X) \to \mathcal{O}(U).$

Because $\Char K = 0$, we can make the following

\begin{defn}\label{DefnOfEta} Let $U \in X_w$, $\psi \in \End_K \O(U)$ and $\alpha \in \mathbb{N}^d$. Then we write
\[ \eta_\alpha(\psi) := \frac{1}{\alpha!}\sum_{\beta\leq\alpha} \psi(x^{\beta})\binom{\alpha}{ \beta }(-x)^{\alpha-\beta} \in \O(U).\]
Now, for every $\varphi\in \mathcal{E}(U)$ we can define the formal expression
\begin{equation}\label{eqn:EtaDef} \eta_{X}(U)(\varphi):=\sum_{\alpha \in \mathbb{N}^d} \eta_\alpha(\varphi(U)) \partial^{\alpha}\in \prod_{\alpha=0}^{\infty}\mathcal{O}(U)\partial^{\alpha}.
\end{equation}
\end{defn}

With these notations, we can formulate the following

\begin{lem}\label{EtaRestrictsWell} For every $U \in X_w$, every $\varphi \in \E(X)$ and every $\alpha \in \mathbb{N}^d$ we have
\[ \eta_\alpha( \varphi(X) )_{|U}= \eta_\alpha( \varphi(U) ).\]
\end{lem}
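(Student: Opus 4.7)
The plan is to unwind the definition of $\eta_\alpha$ and use two structural facts: (i) the restriction map $\sigma := \sigma_{U \subset X} : \O(X) \to \O(U)$ is a $K$-algebra homomorphism, and (ii) the condition on $\varphi \in \E(X) = \Hom_{X_w}(\O_X, \O_X)$ that its components commute with restriction, namely $\sigma \circ \varphi(X) = \varphi(U) \circ \sigma$.

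First I would apply $\sigma$ term-by-term to the defining formula
\[ \eta_\alpha(\varphi(X)) = \frac{1}{\alpha!}\sum_{\beta \leq \alpha} \varphi(X)(x^\beta) \binom{\alpha}{\beta} (-x)^{\alpha - \beta} \in \O(X).\]
Since $\sigma$ is $K$-linear and multiplicative, the restriction of the right-hand side equals
\[ \frac{1}{\alpha!} \sum_{\beta \leq \alpha} \sigma\!\left( \varphi(X)(x^\beta) \right) \binom{\alpha}{\beta} \sigma\!\left( (-x)^{\alpha-\beta} \right).\]
For the factors $(-x)^{\alpha-\beta}$, recall the abuse of notation fixed just before Definition~\ref{DefnOfEta}: the symbol $x^\gamma$ in $\O(U)$ is by definition the image of $x^\gamma \in \O(\mathbb{D}^d)$ under $\O(\mathbb{D}^d) \to \O(X) \to \O(U)$, so $\sigma((-x)^{\alpha-\beta}) = (-x)^{\alpha-\beta}$ in $\O(U)$. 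Likewise $\sigma(x^\beta) = x^\beta$.

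Next I would invoke the compatibility condition built into the definition of a morphism of sheaves of Banach spaces from Section~\ref{ShvBan}: since $\varphi \in \E(X)$, we have $\sigma \circ \varphi(X) = \varphi(U) \circ \sigma$, so
\[ \sigma\!\left( \varphi(X)(x^\beta) \right) = \varphi(U)\!\left( \sigma(x^\beta) \right) = \varphi(U)(x^\beta).\]
Substituting into the displayed expression gives exactly the definition of $\eta_\alpha(\varphi(U))$, completing the proof.

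There is essentially no obstacle here; the only point requiring a moment of care is confirming that the symbols $x^\beta$ and $(-x)^{\alpha-\beta}$ appearing on the two sides of the claimed identity really refer to the same element under restriction, which is immediate from the abuse of notation convention already declared. The statement is a bookkeeping lemma that packages naturality of $\eta_\alpha$ in the open set $U$, and will be used later to show that the tuple $(\eta_X(U)(\varphi))_{U}$ glues into a global object.
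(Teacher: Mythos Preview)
Your proof is correct and follows exactly the same approach as the paper: the paper's proof simply notes that the result is immediate because $\varphi$ is a morphism of presheaves and the restriction map $\O(X)\to\O(U)$ is a $K$-algebra homomorphism carrying $x^\beta$ to $x^\beta$. Your write-up spells out these two observations in slightly more detail, but there is no substantive difference.
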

\begin{proof} This is immediate because $\varphi$ is a morphism of presheaves, and the restriction map $\O(X) \to \O(U)$ is a $K$-algebra homomorphism that carries $x^\beta \in \O(X)$ to $x^\beta \in \O(U)$ whenever $U \in X_w$.
\end{proof}

\begin{lem}\label{lem:invariant}Let $\psi : \O(X) \to \O(X)$ be a $K$-linear map, let $c_1,\ldots, c_d \in K$ and define $y_i := x_i - c_i$  for all $i$.  Then
\[\sum_{\beta \leq \alpha}\psi(x^\beta) \binom{\alpha}{\beta}   (-x)^{\alpha - \beta}= \sum_{\beta \leq \alpha}   \psi(y^\beta)\binom{\alpha}{\beta}(-y)^{\alpha - \beta}.
\]
\end{lem}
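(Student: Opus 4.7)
The statement is the assertion that the expression $\sum_{\beta \leq \alpha}\psi(z^\beta)\binom{\alpha}{\beta}(-z)^{\alpha-\beta}$ is invariant under the translation $z \mapsto z - c$ by a constant vector $c \in K^d$. Since $\psi$ is only assumed $K$-linear and $c_i \in K$, the natural tool is the multi-variable binomial theorem combined with this linearity.

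My plan is to attack the right-hand side directly and match it termwise to the left-hand side. First, I would expand
\[ y^\beta = (x-c)^\beta = \sum_{\gamma \leq \beta} \binom{\beta}{\gamma} x^\gamma (-c)^{\beta - \gamma} \]
and pull the scalars $(-c)^{\beta-\gamma} \in K$ through $\psi$ to get $\psi(y^\beta) = \sum_{\gamma \leq \beta}\binom{\beta}{\gamma}(-c)^{\beta-\gamma}\psi(x^\gamma)$. Similarly I would expand $(-y)^{\alpha-\beta} = (c-x)^{\alpha-\beta} = \sum_{\delta \leq \alpha - \beta}\binom{\alpha-\beta}{\delta} c^{\alpha-\beta-\delta}(-x)^\delta$. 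After substitution, the right-hand side becomes a triple sum over multi-indices $\beta, \gamma, \delta$ (with $\gamma \leq \beta$ and $\delta \leq \alpha - \beta$) of terms of the form $(\text{scalar in } K) \cdot \psi(x^\gamma) \cdot (-x)^\delta$.

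Next I would fix $\gamma$ and $\delta$ and collect the coefficient of $\psi(x^\gamma)(-x)^\delta$. The triple of binomials $\binom{\alpha}{\beta}\binom{\beta}{\gamma}\binom{\alpha-\beta}{\delta}$ factors componentwise as $\binom{\alpha}{\gamma}\binom{\alpha-\gamma}{\delta}\binom{\alpha-\gamma-\delta}{\beta-\gamma}$, so with $j = \beta - \gamma$ the inner sum becomes
\[ \binom{\alpha}{\gamma}\binom{\alpha-\gamma}{\delta} \sum_{0 \leq j \leq \alpha-\gamma-\delta} \binom{\alpha-\gamma-\delta}{j} (-c)^j c^{(\alpha-\gamma-\delta) - j}. \]
By the (multi-variable) binomial theorem this inner sum is $((-c)+c)^{\alpha-\gamma-\delta} = 0^{\alpha-\gamma-\delta}$, which vanishes unless every coordinate of $\alpha - \gamma - \delta$ is zero, i.e.\ unless $\gamma + \delta = \alpha$. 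In that surviving case the coefficient collapses to $\binom{\alpha}{\gamma}\binom{\alpha-\gamma}{\alpha-\gamma} = \binom{\alpha}{\gamma}$, which matches exactly the coefficient of $\psi(x^\gamma)(-x)^{\alpha-\gamma}$ on the left-hand side.

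The argument is thus purely combinatorial, using only $K$-linearity of $\psi$ and binomial identities that reduce coordinate-by-coordinate to the single-variable case; no analytic input or smoothness of $X$ is needed. The one thing to keep straight is the multi-index bookkeeping (componentwise $\leq$, and the coordinatewise factorization of the multi-binomial identities), but I don't anticipate any genuine obstacle beyond this.
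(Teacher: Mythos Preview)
Your argument is correct: the direct expansion via the multi-binomial theorem, the trinomial factorisation
\[
\binom{\alpha}{\beta}\binom{\beta}{\gamma}\binom{\alpha-\beta}{\delta}
= \binom{\alpha}{\gamma}\binom{\alpha-\gamma}{\delta}\binom{\alpha-\gamma-\delta}{\beta-\gamma},
\]
and the collapse of the inner sum to $0^{\alpha-\gamma-\delta}$ all check out, and the bookkeeping with the range $\gamma \leq \beta \leq \alpha - \delta$ is fine.

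The paper, however, takes a much shorter and more conceptual route. It introduces the $K$-linear map
\[
Q := \mult \circ (\psi \otimes 1_{\O(X)}) : \O(X) \otimes_K \O(X) \longrightarrow \O(X),
\]
and observes that both sides of the claimed identity are simply $Q\bigl((x \otimes 1 - 1 \otimes x)^\alpha\bigr)$ and $Q\bigl((y \otimes 1 - 1 \otimes y)^\alpha\bigr)$ respectively. Since
\[
y_i \otimes 1 - 1 \otimes y_i = (x_i - c_i)\otimes 1 - 1 \otimes (x_i - c_i) = x_i \otimes 1 - 1 \otimes x_i,
\]
the two inputs to $Q$ are literally equal, and the lemma follows in one line. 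Your approach proves the same combinatorial cancellation by hand; the tensor-product trick hides that cancellation inside the algebraic identity $c_i \otimes 1 = 1 \otimes c_i$ for scalars, which is exactly why the constants drop out. Both methods use only $K$-linearity of $\psi$, but the paper's avoids the triple sum and the binomial identity entirely.
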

\begin{proof}Consider the $K$-linear map 
\[
Q := \mult\circ (\psi \otimes 1_{\O(X)}) : \O(X) \otimes \O(X) \to \O(X).
\]
The left, respectively, right, hand side of the equality we propose to show can be identified with $Q( (x \otimes 1 - 1 \otimes x)^\alpha)$, respectively, $Q( (y \otimes 1 - 1 \otimes y)^\alpha)$. However, 
\[ x_i \otimes 1 - 1 \otimes x_i = (y_i + c_i) \otimes 1 - 1 \otimes (y_i + c_i) = y_i \otimes 1 - 1 \otimes y_i\]
and hence $(x \otimes 1 - 1 \otimes x)^\alpha = (y \otimes 1 - 1 \otimes y)^\alpha$ for any $\alpha \in \mathbb{N}^d$. \end{proof}

\begin{lem}\label{lem:FactGrow} There is a real number $0 < \varpi \leq 1$ depending only on $K$ such that $\varpi^{|\alpha|} \leq |\alpha!|$ for all $\alpha \in \mathbb{N}^{d}$.
\end{lem}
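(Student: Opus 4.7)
The plan is to reduce from multi-indices to the one-variable case. Since $\alpha! = \alpha_1!\cdots\alpha_d!$ and $|\alpha| = \alpha_1 + \cdots + \alpha_d$, the multiplicativity of the absolute value gives
\[|\alpha!| = \prod_{i=1}^d |\alpha_i!| \qmb{and} \varpi^{|\alpha|} = \prod_{i=1}^d \varpi^{\alpha_i},\]
so it suffices to find $0 < \varpi \leq 1$, depending only on $K$, with $\varpi^n \leq |n!|$ for every $n \geq 0$.

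Next I would split according to the residue characteristic of $K$. In the equal-characteristic-zero case, the prime subring $\mathbb{Z} \subseteq K^\circ$ maps into $K^\circ \setminus K^{\circ\circ}$, so $|n!| = 1$ for every $n$, and $\varpi = 1$ works. In the mixed-characteristic case, the residue field has some characteristic $p > 0$, and the restriction of $|\cdot|$ to $\mathbb{Q}$ is equivalent to the standard $p$-adic absolute value; in particular $|p| < 1$ and $|n!| = |p|^{v_p(n!)}$, where $v_p$ is the normalised $p$-adic valuation on $\mathbb{Q}$.

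The key estimate is Legendre's formula $v_p(n!) = (n - s_p(n))/(p-1)$, where $s_p(n)$ is the sum of base-$p$ digits of $n$. Since $s_p(n) \geq 0$, we obtain $v_p(n!) \leq n/(p-1)$, and therefore
\[|n!| \geq |p|^{n/(p-1)} = \left(|p|^{1/(p-1)}\right)^n.\]
Setting $\varpi := |p|^{1/(p-1)} \in (0,1)$ then completes the proof, with $\varpi$ depending only on $K$ (via its residue characteristic and the normalisation of $|\cdot|$ on $\mathbb{Q}_p$).

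There is no real obstacle here — the statement is essentially Legendre's classical bound repackaged — so the only care needed is to handle both the equal and mixed characteristic cases uniformly, and to verify that $\varpi$ genuinely depends only on $K$ rather than on $d$ or $\alpha$; the reduction in the first paragraph makes this clear.
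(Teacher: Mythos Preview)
Your proof is correct and follows essentially the same route as the paper: reduce to the one-variable case via multiplicativity, split according to whether the induced valuation on $\mathbb{Q}$ is trivial, and in the nontrivial case use the bound $v_p(n!) \leq n/(p-1)$ to set $\varpi = |p|^{1/(p-1)}$. The only cosmetic differences are that the paper invokes Ostrowski's theorem explicitly and cites Bourbaki for the valuation bound, whereas you phrase the case split via residue characteristic and derive the bound from Legendre's formula.
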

\begin{proof}
Consider the prime subfield $\mathbb{Q} \subset K$. If the induced valuation on $\mathbb{Q}$ is trivial then we may take $\varpi=1$ as in this case $|\alpha!| = 1$ for any $\alpha \in \mathbb{N}^d$. 

Otherwise, by Ostrowski's Theorem there is a unique prime number $p$ such that $|p| < 1$ and $|p'| = 1$ for every prime $p' \neq p$, so that the topology induced by $K$ on $\mathbb{Q}$ is the $p$-adic topology. Define
\[\varpi := |p|^{\frac{1}{p-1}}.\]
Now, for the $p$-adic valuation $v_p$ on $\mathbb{Q}$ we have the estimate $v_p(m!) \leq \frac{m}{p-1}$ by \cite[Chapter II, Section 8.1, Lemma 1]{BourLGLA}. Since $|p| < 1$ we deduce that
\[|m!| = |p|^{v_p(m!)}\geq \varpi^m \qmb{for any} m \in \mathbb{N}.\]
Hence $|\alpha!|= |\alpha_1!|\cdots |\alpha_d!|\geq \varpi^{\alpha_1} \cdots \varpi^{\alpha_d} = \varpi^{|\alpha|}$ as claimed.
\end{proof}
\begin{thm}\label{thm:DefFun}Suppose that the residue field $k$ of $K$ is algebraically closed and uncountable. Then for every $\varphi \in \E(X)$, the family
\[ (\eta_\alpha(\varphi(X)))_{\alpha \in \mathbb{N}^d} \subset \O(X)\]
is rapidly decreasing.
\end{thm}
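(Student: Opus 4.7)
The plan is to estimate each $\eta_\alpha(\varphi(X))$ in supremum norm by evaluating it at a single ``generic'' $K$-point $c \in X(K)$, and then, for every $r$, to restrict $\varphi$ to a small affinoid neighborhood $V_r$ of $c$ of radius $|\pi|^r$ in order to bound this evaluation by $|\pi|^{r|\alpha|}/|\alpha!|$.

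First, reduce to the case that $X$ is reduced with irreducible reduction; the general case follows from this after passing to a finite admissible affinoid cover of $X$ by such pieces, using Lemma \ref{EtaRestrictsWell} and the fact that the supremum norm on $X$ agrees with the maximum of the supremum norms on the covering pieces. Having done this, apply Corollary \ref{cor:cExists} to the countable family of non-zero elements among $\{\eta_\alpha(\varphi(X))\}_{\alpha \in \mathbb{N}^d} \subset \O(X)$ to obtain a single $c \in X(K)$ satisfying
\[|\eta_\alpha(\varphi(X))(c)| = |\eta_\alpha(\varphi(X))|_X \qmb{for all} \alpha \in \mathbb{N}^d.\]
Both hypotheses on the residue field are consumed at this step.

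Next, for each sufficiently large $r \in \mathbb{N}$, the rigid analytic inverse function theorem for \'etale morphisms produces an affinoid subdomain $V_r \subseteq X$ containing $c$ on which $g$ factors through the small polydisc $\{z \in \mathbb{D}^d : |x_i(z) - x_i(g(c))| \leq |\pi|^r\}$. Consequently, the functions $y_{c,i} := x_i - x_i(c) \in \O(V_r)$ satisfy $|y_{c,i}|_{V_r} \leq |\pi|^r$, and hence $|y_c^\alpha|_{V_r} \leq |\pi|^{r|\alpha|}$. Applying Lemma \ref{lem:invariant} with $c_i = x_i(c)$ to $\varphi(V_r) : \O(V_r) \to \O(V_r)$ and evaluating the resulting identity at $c$, where all $y_{c,i}$ vanish, only the $\beta = \alpha$ term survives and yields
\[\alpha! \cdot \eta_\alpha(\varphi(V_r))(c) = \varphi(V_r)(y_c^\alpha)(c).\]
Combining this identity with Lemma \ref{EtaRestrictsWell}, the choice of $c$, the boundedness of $\varphi(V_r)$ with some operator norm $C_r < \infty$, and Lemma \ref{lem:FactGrow}, we obtain
\[|\eta_\alpha(\varphi(X))|_X = |\eta_\alpha(\varphi(V_r))(c)| \leq \frac{C_r \cdot |\pi|^{r|\alpha|}}{|\alpha!|} \leq C_r \left(\frac{|\pi|^r}{\varpi}\right)^{|\alpha|}.\]

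Finally, given any $s \in \mathbb{N}$, choose $r$ so large that $|\pi|^{r-s} < \varpi$, which is possible since $|\pi| < 1$ and $\varpi > 0$. Then $|\eta_\alpha(\varphi(X))|_X / |\pi|^{s|\alpha|} \leq C_r (|\pi|^{r-s}/\varpi)^{|\alpha|} \to 0$ as $|\alpha| \to \infty$, which is exactly rapid decrease. The main obstacle is the first step: constructing a single $c$ that realizes the supremum norms of all countably many non-zero $\eta_\alpha(\varphi(X))$ simultaneously. This is precisely where the algebraically closed and uncountable hypotheses on the residue field play their essential role through Corollary \ref{cor:cExists}; the remaining analytic estimates are then a fairly direct consequence of the Taylor-type identity in Lemma \ref{lem:invariant} together with the boundedness built into the definition of $\E(X)$.
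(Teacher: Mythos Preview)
Your proof is correct and follows essentially the same route as the paper: locate a generic point $c$ via Corollary \ref{cor:cExists}, restrict $\varphi$ to small affinoid neighborhoods of $c$, re-center the coordinates using Lemma \ref{lem:invariant}, and bound using Lemma \ref{lem:FactGrow}. Two cosmetic remarks: the inverse function theorem is not needed to produce $V_r$, since the Weierstrass subdomain $\Sp\big(\O(X)\langle (x_i - g(c)_i)/\pi^r\rangle\big)$ already contains $c$ and has $|y_{c,i}| \leq |\pi|^r$; and for the reduction to irreducible reduction the paper uses the Shilov boundary decomposition $|a|_X = \max_i |a|_{X_i}$ with Laurent domains $X_i = X(1/g_i)$ rather than an admissible cover, which sidesteps having to justify that such a cover exists.
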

\begin{proof}
Write $\xi_\alpha := \eta_\alpha(\varphi(X)) \in \O(X)$, and let $S = \{\alpha \in \mathbb{N}^d | \xi_\alpha \neq 0\}$. The case when $S$ is finite is clear, so assume that $S$ is countably infinite. 

Assume first that the reduction  $Y = \text{Spec}  (\O(X)^\circ / \O(X)^{\circ\circ})$ is irreducible. Note that $X$ is reduced because it is assumed to be smooth, so using Corollary \ref{cor:cExists}, we may choose $c \in X(K)$ such that 
\[
   |\xi_\alpha|_X = |\xi_\alpha(c)| \qmb{for all} \alpha \in S.\]
  Choose a non-zero element $\pi \in K$ with $|\pi| < 1$, and fix $n \geq 0$. Consider the affinoid subdomain
  \[ X_n := \Sp \left( \O(X) \left\langle \frac{x_1 - g(c)_1}{\pi^n}, \frac{x_2 - g(c)_2}{\pi^n}, \ldots, \frac{x_d - g(c)_d}{\pi^n} \right\rangle\right)\]
of $X$, and let $y_i := x_i - g(c)_i$ for $i=1,\ldots, d$. Then
\[
\xi_\alpha |_{X_n} = \eta_\alpha( \varphi(X) )_{|X_n} = \eta_\alpha( \varphi(X_n) )=\frac{1}{\alpha!}\sum_{\beta\leq \alpha} (-y)^{\alpha - \beta} \binom{\alpha}{\beta} \varphi(X_n)(y^\beta) \]
by Lemma \ref{EtaRestrictsWell} and Lemma \ref{lem:invariant}. Now $|y_i|_{X_n} \leq |\pi^n|$ for each $i=1,\ldots,d$, and $c \in X_n(K)$ for all $n \geq 0$. Therefore
\begin{equation}\label{XiNormEstimate}
\begin{split}
  |\xi_\alpha|_X = |\xi_\alpha(c)| \leq |\xi_\alpha|_{X_n} &  \leq \frac{1}{|\alpha!|}\max_{\beta \leq \alpha} |y^{\alpha-\beta}|_{X_n}\cdot ||\varphi(X_n)|| \cdot |y^\beta|_{X_n} \\ & \leq \frac{||\varphi(X_n)||\cdot |\pi^{n|\alpha|}|}{|\alpha!|} 
  \end{split}
\end{equation}
for all $n \geq 0$ and all $\alpha \in S$.
Choose $N \in \mathbb{N}$ large enough so that $|\pi|^{N}\leq \varpi$. Then by Lemma \ref{lem:FactGrow}, 
\[ | \pi^{N |\alpha|} | = |\pi|^{ N |\alpha| }\leq \varpi^{|\alpha|} \leq |\alpha!|.\]
Therefore, applying (\ref{XiNormEstimate}) with $n$ replaced by $n + N + 1$, we obtain
\[\frac{ |\xi_\alpha|_X}{|\pi^{n|\alpha|}|} \leq \frac{ || \varphi(X_{n+N+1}) || \cdot | \pi^{N |\alpha|} | \cdot |\pi|^{|\alpha|}}{|\alpha!| } \leq || \varphi(X_{n+N+1}) || \cdot |\pi|^{|\alpha|}\]
for any $\alpha \in S$, and therefore also for any $\alpha \in \mathbb{N}^d$ since $|\xi_\alpha|_X = 0$ if $\alpha \notin S$. Because $|\pi| < 1$, the family of elements $(\xi_\alpha)$ in $\O(X)$ is rapidly decreasing:
\[\frac{ \xi_\alpha}{\pi^{n|\alpha|}} \to 0 \quad \mbox{as} \quad |\alpha| \to \infty, \qmb{for any} n \geq 0.\]
Returning to the general case, let $\{|\cdot|_1, |\cdot|_2,\dots, |\cdot|_r\}$ be the Shilov boundary of the Berkovich space associated to $X$. It follows from \cite[Proposition 2.4.4]{BerkovichFirstBook} that 
\[|a|_X = \max_{1 \leq i \leq r} |a|_i \qmb{for any} a \in \O(X).\] 
Furthermore, we can find elements $g_1,\ldots, g_r \in \O(X)$ of supremum norm $1$ such that if $X_i$ denotes the Laurent subdomain $X(1/g_i)$ of $X$ and $\rho_i : \O(X) \to \O(X_i)$ is the restriction map, then $|\rho_i(a)|_{X_i} = |a|_i$ for all $a \in \O(X)$, and $X_i$ has irreducible reduction for each $i = 1,\ldots, r$.

Now, $\rho_i(\xi_\alpha) = \eta_\alpha(\varphi(X))_{|X_i} = \eta_\alpha( \varphi(X_i))$ by Lemma \ref{EtaRestrictsWell}, and $\varphi|_{X_i}$ is an element of $\E(X_i)$. So by the above, we know that
\[|\rho_i(\xi_\alpha) / \pi^{n |\alpha|}|_{X_i} \to 0\]
for all $i = 1,\ldots, r$, and for all $n \geq 0$. Hence, for any $n \geq 0$,
\[|\xi_\alpha / \pi^{n|\alpha|}|_X = \max_{1 \leq i \leq r} |\rho_i(\xi_\alpha) / \pi^{n |\alpha|}|_{X_i} \to 0\]
as $\alpha \to \infty$ also. 
\end{proof}
\begin{cor}\label{EtaCor} Suppose that the residue field of $K$ is algebraically closed and uncountable. Then
\be \item for any $\varphi\in \mathcal{E}_{X}(U)$, the expression
\[ \eta_X(U)(\varphi) := \sum_{\alpha \in \mathbb{N}^d} \eta_\alpha(\varphi(U)) \partial^{\alpha}\]
defines an element of $\wideparen{{\mathcal{D}}}(U)$,
\item there is a well-defined morphism of $\O_X$-modules
\[ \eta_X : \E_X \to \w\D_X,\]
\item for every $\varphi \in \E(X)$, the \emph{total symbol}
\[T(\varphi):= \sum_{\alpha \in \mathbb{N}^d} \eta_\alpha( \varphi(X) ) \hspace{0.1cm} \zeta^\alpha\in \O(X)[[\zeta_1,\zeta_2,\ldots,\zeta_d]]
\]
defines a rigid analytic function on $X \times \mathbb{A}^d$,
\ee\end{cor}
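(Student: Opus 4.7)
The plan is to deduce all three parts from the rapid-decrease conclusion of Theorem \ref{thm:DefFun} together with the normal form for $\w\D(U)$ provided by Lemma \ref{lem:symbol}, and then to verify compatibility data by purely formal manipulations using Lemma \ref{EtaRestrictsWell}.

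For part (a), fix $U \in X_w$. The composition $U \hookrightarrow X \xrightarrow{g} \mathbb{D}^d$ is étale (open immersions are étale, and étale morphisms compose), so $U$ satisfies the standing hypothesis imposed on $X$ and Theorem \ref{thm:DefFun} applies with $U$ in place of $X$. Since $\varphi \in \mathcal{E}_X(U)$ restricts to the bounded endomorphism $\varphi(U) \in \E(U)$ of $\O(U)$, Theorem \ref{thm:DefFun} shows that $(\eta_\alpha(\varphi(U)))_{\alpha \in \mathbb{N}^d}$ is a rapidly decreasing family in $\O(U)$. By Lemma \ref{lem:symbol}, the formal expression $\sum_{\alpha \in \mathbb{N}^d} \eta_\alpha(\varphi(U)) \partial^\alpha$ is then precisely an element of $\w\D(U)$.

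For part (b), I need to check two things: compatibility of $\eta_X$ with restriction maps, and $\O_X$-linearity. Compatibility with restriction comes directly from Lemma \ref{EtaRestrictsWell}: for $V \subseteq U$ in $X_w$, the lemma gives $\eta_\alpha(\varphi(U))_{|V} = \eta_\alpha(\varphi(V))$, and the restriction maps in $\w\D_X$ send the canonical lifts $\partial_i$ on $U$ to the canonical lifts on $V$ (since both arise from pulling back $d/dx_i$ along the étale map to $\mathbb{D}^d$). For the $\O_X$-module structure, observe that $(a\varphi)(x^\beta) = a \cdot \varphi(x^\beta)$ and $a$ pulls out of the defining sum in Definition \ref{DefnOfEta}, giving $\eta_\alpha(a\varphi) = a \cdot \eta_\alpha(\varphi)$, hence $\eta_X(U)(a\varphi) = a \cdot \eta_X(U)(\varphi)$.

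For part (c), I recall that a rigid analytic function on $X \times \mathbb{A}^d$ may be identified with a formal power series $\sum_{\alpha \in \mathbb{N}^d} a_\alpha \zeta^\alpha$ where $a_\alpha \in \O(X)$, subject to the convergence condition $|a_\alpha|_X R^{|\alpha|} \to 0$ for every real $R > 0$ (equivalently, by Remark \ref{rmk:Equiv}, to $(a_\alpha)$ being rapidly decreasing in the sense of our Definition). Theorem \ref{thm:DefFun} applied with $U = X$ shows precisely that $(\eta_\alpha(\varphi(X)))$ satisfies this condition, so $T(\varphi)$ defines an element of $\O(X \times \mathbb{A}^d)$.

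The proof is largely bookkeeping after Theorem \ref{thm:DefFun}; the only non-formal step is the verification in (a), which requires that the étale chart extends to every $U \in X_w$, and the main obstacle -- namely controlling the growth of $\eta_\alpha(\varphi(X))$ -- has already been handled in Theorem \ref{thm:DefFun} itself.
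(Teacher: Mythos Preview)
Your proof is correct and follows essentially the same approach as the paper: deduce (a) from Theorem \ref{thm:DefFun} and Lemma \ref{lem:symbol} applied with $U$ in place of $X$, verify (b) via the restriction compatibility of Lemma \ref{EtaRestrictsWell}, and reduce (c) to the rapid-decrease condition. You are slightly more explicit than the paper in noting that the \'etale chart restricts to each $U \in X_w$ and in checking $\O_X$-linearity directly; one small notational slip is that $\varphi(U)$ lies in $\End_K \O(U)$ rather than in $\E(U)$, but this does not affect the argument.
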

\begin{proof} (a) This follows from Theorem \ref{thm:DefFun} and Lemma \ref{lem:symbol}.

(b) By part (a), for every $U \in X_w$ there is an $\O(U)$-linear morphism
\[ \eta_X(U) : \E_X(U) \to \w\D_X(U).\] 
Since $\varphi$ is a morphism of sheaves over $U_w$ we have $\varphi(U)(x^{\beta})|_{V}=\varphi(V)(x^{\beta})$ for any $V\in U_w$. Therefore, Definition \ref{DefnOfEta} shows that the $\eta_X(U)$ are compatible with restriction and so define a morphism of sheaves.

(c) For any $n \geq 0$, $T(\varphi)$ converges on $\O(X \times \Sp K \langle \pi^n \zeta_1, \ldots \pi^n \zeta_d \rangle)$ if and only if $\frac{ \xi_\alpha}{\pi^{n|\alpha|}} \to 0$ as $|\alpha| \to \infty$. So, by Theorem \ref{thm:DefFun}, $T(\varphi)$ converges everywhere on $X \times \mathbb{A}^d \cong T^\ast X$.\end{proof}
\begin{lem}\label{EtaRho} Suppose that the residue field of $K$ is algebraically closed and uncountable. Then $\eta_X \circ \rho_X = \text{id}_{\w\D}$.
\end{lem}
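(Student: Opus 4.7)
The proof is a direct symbol-coefficient calculation. Since both $\eta_X \circ \rho_X$ and $\id_{\w\D}$ are morphisms of sheaves of $K$-algebras on $X_w$, it suffices to verify the identity at the level of sections on each $U \in X_w$. Any such $U$ comes equipped with the composed \'etale morphism $U \hookrightarrow X \to \mathbb{D}^d$ and inherits the canonical lifts $\partial_1, \ldots, \partial_d \in \T(U)$, so Lemma \ref{lem:symbol} and Definition \ref{DefnOfEta} apply verbatim on $U$, and I can work with $U$ in the role of $X$.

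Fix $U \in X_w$ and $a = \sum_{\gamma \in \mathbb{N}^d} a_\gamma \partial^\gamma \in \w\D(U)$ with $(a_\gamma)$ rapidly decreasing. My first task is to compute the action of $\rho_X(U)(a)$ on the polynomial $x^\beta$ for each $\beta \in \mathbb{N}^d$. The construction in Lemma \ref{DCapAction} realises $\rho_X(U)(a)$ as a limit, inside some Banach algebra $\hK{U(\pi^r \L)}$, of the finite order operators $a^{(N)} = \sum_{|\gamma| \leq N} a_\gamma \partial^\gamma$ acting continuously on $\O(U)$. Since $\partial^\gamma(x^\beta) = 0$ whenever $\gamma \not\leq \beta$ and equals $\gamma!\binom{\beta}{\gamma} x^{\beta-\gamma}$ otherwise, the formal infinite sum collapses to a finite sum that already stabilises at $N = |\beta|$. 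Hence
\[ \rho_X(U)(a)(x^\beta) \;=\; \sum_{\gamma \leq \beta} \gamma!\binom{\beta}{\gamma} a_\gamma \, x^{\beta - \gamma}.\]

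Next, I substitute into Definition \ref{DefnOfEta}. The $\alpha$-th coefficient becomes
\[ \eta_\alpha(\rho_X(U)(a)) \;=\; \frac{1}{\alpha!} \sum_{\beta \leq \alpha} \binom{\alpha}{\beta}(-x)^{\alpha - \beta} \sum_{\gamma \leq \beta} \gamma!\binom{\beta}{\gamma} a_\gamma \, x^{\beta - \gamma}.\]
Swap the order of summation, apply the binomial identity $\binom{\alpha}{\beta}\binom{\beta}{\gamma} = \binom{\alpha}{\gamma}\binom{\alpha - \gamma}{\beta - \gamma}$, and change variables $\beta' = \beta - \gamma$. The inner sum then reads
\[ \sum_{\beta' \leq \alpha - \gamma} \binom{\alpha - \gamma}{\beta'} x^{\beta'} (-x)^{(\alpha - \gamma) - \beta'} \;=\; (x + (-x))^{\alpha - \gamma} \;=\; \prod_{i=1}^d 0^{\alpha_i - \gamma_i},\]
which vanishes unless $\gamma = \alpha$. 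The single surviving term contributes $\frac{1}{\alpha!} \cdot \alpha! \binom{\alpha}{\alpha} a_\alpha = a_\alpha$, whence $\eta_\alpha(\rho_X(U)(a)) = a_\alpha$ for every $\alpha \in \mathbb{N}^d$, and therefore $\eta_X(U)(\rho_X(U)(a)) = \sum_\alpha a_\alpha \partial^\alpha = a$.

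There is no genuine obstacle: the argument is a bookkeeping exercise built on the fact that $(x \otimes 1 - 1 \otimes x)^{\alpha - \gamma}$ kills the monomial basis. The only subtle point is the termwise evaluation of the infinite sum $\rho_X(U)(a)$ on $x^\beta$, but this is immediate from continuity of the action together with the polynomial nature of $x^\beta$, so no convergence analysis is actually needed.
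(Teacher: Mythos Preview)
Your proof is correct and follows essentially the same approach as the paper's: reduce to a fixed affinoid, compute $\rho_X(U)(a)(x^\beta)$ termwise (the sum is finite on each monomial), substitute into the definition of $\eta_\alpha$, swap the order of summation, and collapse the inner sum via a binomial identity. The only cosmetic difference is in how the inner sum is handled---you use $\binom{\alpha}{\beta}\binom{\beta}{\gamma} = \binom{\alpha}{\gamma}\binom{\alpha-\gamma}{\beta-\gamma}$ and recognise the resulting sum as the expansion of $(x-x)^{\alpha-\gamma}$, whereas the paper rewrites the summand in terms of factorials and makes the substitution $\epsilon = \beta-\alpha$ to reach the expansion of $(1-1)^{\gamma-\alpha}$; these are two bookkeeping styles for the same identity.
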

\begin{proof} We have show that $\eta_X(U) \circ \rho_X(U) = 1_{\w\D(U)}$ for any $U \in X_w$. Since the restriction of $g : X \to \mathbb{D}^d$ to $U \subset X$ is still \'etale, it will be sufficient to consider the case $U = X$ only. 

Let $a = \sum_{\alpha \in \mathbb{N}^d} a_\alpha \partial^\alpha \in \w\D(X)$ so that $(a_\alpha)_{\alpha \in \mathbb{N}^d}$ is a rapidly decreasing family of elements in $\O(X)$, and let $\psi := \rho_X(X)(a) = \rho(X,X)(a) \in \End_K \O(X)$. We must show that
\[ \eta_\gamma(\psi) = a_\gamma \qmb{for all} \gamma \in \mathbb{N}^d.\]
Now $\psi(x^\beta) = \rho(X,X)(a)(x^\beta) = \underset{\alpha \leq \beta}{\sum} a_\alpha \frac{\beta!}{(\beta - \alpha)!} x^{\beta - \alpha}$ for any $\beta \in \mathbb{N}^d$. Hence
\begin{equation}\label{eqn:combin}\begin{split} \gamma! \eta_\gamma(\psi) = & \sum_{\beta \leq \gamma} \psi(x^\beta)\binom{\gamma}{\beta} (-x)^{\gamma - \beta} = \\
& = \sum_{\beta\leq \gamma } \left( \sum_{\alpha\leq\beta }a_\alpha \frac{\beta!}{(\beta - \alpha)!} x^{\beta - \alpha} \right) \binom{ \gamma }{ \beta } (-x)^{\gamma-\beta} = \\
& = \sum_{\alpha \leq \gamma} \left( \sum_{\alpha \leq \beta \leq \gamma} \frac{\beta!}{(\beta - \alpha)!} \binom{\gamma}{\beta} (-1)^{\gamma - \beta} \right) a_\alpha x^{\gamma - \alpha}.
\end{split}
\end{equation}
For fixed $\alpha \leq \gamma$ and $\beta$ in the range $\alpha \leq \beta \leq \gamma$, make the change of variable $\epsilon := \beta - \alpha$. Then
\[ \frac{\beta!}{(\beta - \alpha)!} \cdot \binom{\gamma}{\beta} = \frac{\gamma!}{(\beta - \alpha)! (\gamma - \beta)! } = \frac{ \gamma!}{\epsilon! (\gamma - \alpha - \epsilon)!} = \binom{\gamma - \alpha}{\epsilon} \cdot \frac{\gamma!}{(\gamma - \alpha)!}.\]
Therefore, for any $\alpha \leq \gamma$ we have
\[\sum_{\alpha \leq \beta \leq \gamma} \frac{\beta!}{(\beta - \alpha)!} \binom{\gamma}{\beta} (-1)^{\gamma - \beta} =  \frac{\gamma!}{(\gamma - \alpha)!} \sum_{\epsilon \leq \gamma - \alpha} \binom{\gamma - \alpha}{\epsilon}(-1)^{\gamma - \alpha - \epsilon} = \gamma!\delta_{\alpha, \gamma},\]
and therefore $\gamma!\eta_\gamma(\psi) = \sum_{\alpha \leq \gamma} \gamma!\delta_{\alpha,\gamma} a_\alpha x^{\gamma - \alpha} = \gamma!a_\gamma$. Since $\Char K = 0$, we  conclude that $\eta_\gamma(\psi) = a_\gamma$ as required.
\end{proof}

Recall that we are assuming that $X$ is a smooth $K$-affinoid variety, equipped with an \'etale morphism $g : X \to \mathbb{D}^d$. 

\begin{lem}\label{lem:dense}Suppose that $K$ is algebraically closed. Then there exists an affinoid subdomain $t : Y \hookrightarrow X$ such that $(g \circ t)^{\#}:\mathcal{O}(\mathbb{D}^d)\to \mathcal{O}(Y)$ has dense image.
\end{lem}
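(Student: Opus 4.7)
My approach is to apply a rigid analytic inverse function theorem at a $K$-rational point: find $x \in X(K)$, and take $Y$ to be a sufficiently small Weierstrass polydisc around $x$ (in the étale coordinates pulled back from $\mathbb{D}^d$) so that $g$ restricts to an isomorphism from $Y$ onto a genuine sub-polydisc of $\mathbb{D}^d$. Specifically, since $K$ is algebraically closed and we may assume $X$ is non-empty (otherwise take $Y = X$), every maximal ideal of $\O(X)$ has residue field $K$, so $X(K) \neq \emptyset$. Pick $x \in X(K)$ and let $c := g(x) \in \mathbb{D}^d(K)$, with coordinates $c_i \in K^\circ$. Set $u_i := g^\#(x_i - c_i) \in \O(X)$. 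Because $g$ is étale, the $u_i$ form a regular system of parameters at $x$ and induce an isomorphism of complete local rings $\h{\O}_{\mathbb{D}^d,c} \xrightarrow{\sim} \h{\O}_{X,x}$. Since $g$ is étale and hence quasi-finite, $g^{-1}(c)$ is finite, so after shrinking $X$ to a suitable affinoid neighborhood of $x$ we may assume $g^{-1}(c) = \{x\}$.

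For $r \in |K^\times|$ sufficiently small, I would then take
\[ Y := \{y \in X : |u_i(y)| \leq r \text{ for } i = 1,\ldots, d\} = g^{-1}(V), \qquad V := \{w \in \mathbb{D}^d : |x_i(w) - c_i| \leq r \text{ for } i = 1, \ldots, d\}, \]
both Weierstrass affinoid subdomains. The central claim, which is the main obstacle, is that for $r$ small enough the restriction $g|_Y : Y \to V$ is an isomorphism of affinoids. Since $g|_Y$ is étale with $(g|_Y)^{-1}(c) = \{x\}$, a standard argument shows it is finite étale of degree one over $V$, equivalent to saying that the formal inverse provided by the isomorphism of complete local rings actually converges on $V$. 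Verifying this step rigorously (say, via Newton's method or the finite-étale-cover decomposition of $g^{-1}(V)$) is the real content of the lemma.

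Granted this, the map $(g \circ t)^\# : \O(\mathbb{D}^d) \to \O(Y)$ factors as the restriction $\O(\mathbb{D}^d) \to \O(V)$ followed by the isomorphism $(g|_Y)^\# : \O(V) \xrightarrow{\sim} \O(Y)$, so density of its image reduces to density of $\O(\mathbb{D}^d) \to \O(V)$. Writing $T_i := x_i - c_i$, we have $\O(V) = K\langle T_1/r, \ldots, T_d/r \rangle$, and the polynomials in the $T_i$ (equivalently in the $x_i$) form a dense $K$-subalgebra lying in the image of the restriction from $\O(\mathbb{D}^d) = K\langle x_1, \ldots, x_d \rangle$. Composing with the isomorphism gives the desired density.
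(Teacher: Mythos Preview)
Your approach coincides with the paper's: both pick a $K$-rational point of $X$ (which exists since $K=\overline{K}$), find a small affinoid neighbourhood $Y$ of it on which $g$ restricts to an isomorphism onto a Weierstrass sub-polydisc of $\mathbb{D}^d$, and then use that restriction to a Weierstrass subdomain has dense image. The only substantive difference is how the ``central claim'' is handled.

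You correctly identify that the existence of such a $Y$ with $g|_Y : Y \xrightarrow{\sim} V$ is the real content, and you leave this step open with a sketch (``Newton's method or the finite-\'etale-cover decomposition''). The paper does not reprove this from scratch either: it invokes \cite[\S 8.1.3]{FvdPut}, which says precisely that for an \'etale map $g:X\to\mathbb{D}^d$ and points $b\in X$, $a=g(b)$ with $F_a=F_b=K$, there is a wide open neighbourhood $U$ of $a$ such that $g^{-1}(U)$ splits as $V\coprod W$ with $g|_V:V\to U$ an open immersion. Choosing a closed polydisc $T\subset g(V)$ containing $a$ and setting $Y=(g|_V)^{-1}(T)$ then gives an honest isomorphism $Y\cong T$. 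So your missing step is exactly the content of that reference; citing it (or the equivalent local structure theorem for rigid \'etale morphisms) would complete your argument. For the density of $\O(\mathbb{D}^d)\to\O(T)$ the paper likewise cites \cite[Proposition 7.3.4/2]{BGR}, which is the general statement behind your explicit polynomial argument.
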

\begin{proof}Pick any point $b\in X$ and let $a= g(b)\in \mathbb{D}^d$. Since $K=\overline{K}$, $F_a = F_{b} = \overline{K}$. See  \cite[Definition 7.4.4]{FvdPut} for the notation $F_{a}$. By  \cite[\S 8.1.3]{FvdPut} there exists a wide open neighbourhood $U$ of $a\in \mathbb{D}^d$ such that $g^{-1}(U)= V \coprod W$ where $V$ and $W$ are affinoid, $V$ is a wide open neighborhood of $b$ and $g|_{V}:V \to U$ is an open immersion. Choose any polydisk $T \subset g(V)$ which contains $a$, and let $Y=g^{-1}(T) \subset V$.  Then we have the commutative diagram
\begin{equation}
\xymatrix{Y \ar[r]^{\subset}\ar[d]^{\cong}_{g_{|Y}} &  V\ar[d]  \ar[r]^{\subset} \ar[dr]& V \coprod W \ar[r]^{\subset}  \ar[d] & X\ar[d]^{g} \\ T \ar[r]^{\subset} & g(V) \ar[r]^{\subset}  & U \ar[r]^{\subset} & \mathbb{D}^d.}
\end{equation}
Because $T$ is a Weierstrass subdomain of $\mathbb{D}^d$, \cite[Proposition 7.3.4/2]{BGR} implies that the image of the restriction map $\theta : \mathcal{O}(\mathbb{D}^d)\to \mathcal{O}(T)$ is dense. On the other hand, $(g_{|Y})^\sharp:\mathcal{O}(T)\to \mathcal{O}(Y)$ is an isomorphism because $g|_{V}: V \to U$ is an open immersion. Therefore, the image of $(g \circ t)^\sharp = t^\sharp \circ g^\sharp = (g_{|Y})^\sharp \circ \theta$ is dense, as claimed.
\end{proof}

\begin{prop}\label{EtaInj} Suppose that $K$ is algebraically closed and that $X$ is connected. Let $\varphi \in \E(X)$ be such that $\eta_\alpha(\varphi(X)) = 0$ for all $\alpha \in \mathbb{N}^d$. Then $\varphi(X) = 0$. 
\end{prop}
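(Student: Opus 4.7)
The plan is to leverage the hypothesis to conclude that $\psi := \varphi(X)$ kills every monomial $x^\alpha$, then use Lemma \ref{lem:dense} to transfer this vanishing to a local subdomain where polynomials are dense, and finally pull the conclusion back to $X$ via Proposition \ref{InjRestrs}.

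First I would prove by induction on $|\alpha|$ that $\psi(x^\alpha) = 0$ for every $\alpha \in \mathbb{N}^d$. Isolating the $\beta = \alpha$ term in Definition \ref{DefnOfEta}, the hypothesis $\eta_\alpha(\psi) = 0$ rewrites as
\[ \psi(x^\alpha) = -\sum_{\beta \lneq \alpha} \binom{\alpha}{\beta} \psi(x^\beta)(-x)^{\alpha - \beta}. \]
The base case $\alpha = 0$ is $\psi(1) = \eta_0(\psi) = 0$, and the inductive step is immediate. (This step uses neither the ground field hypothesis nor connectedness.)

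Next I would apply Lemma \ref{lem:dense}, which uses $K = \overline{K}$, to find a non-empty affinoid subdomain $t : Y \hookrightarrow X$ such that $(g \circ t)^\sharp : \mathcal{O}(\mathbb{D}^d) \to \mathcal{O}(Y)$ has dense image. Since the polynomial algebra $K[x_1, \ldots, x_d]$ is dense inside the Tate algebra $\mathcal{O}(\mathbb{D}^d)$ and $(g \circ t)^\sharp$ is continuous, the $K$-subalgebra of $\mathcal{O}(Y)$ generated by the images of $x_1, \ldots, x_d$ is dense in $\mathcal{O}(Y)$. Now $\varphi(Y) : \mathcal{O}(Y) \to \mathcal{O}(Y)$ is bounded, hence continuous, by the very definition of $\E(X)$, and compatibility of $\varphi$ with the restriction $\mathcal{O}(X) \to \mathcal{O}(Y)$ gives $\varphi(Y)(x^\beta) = \psi(x^\beta)_{|Y} = 0$ for every $\beta \in \mathbb{N}^d$. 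A bounded linear map vanishing on a dense subset is zero, so $\varphi(Y) = 0$.

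Finally, for any $f \in \mathcal{O}(X)$ the same compatibility gives $\psi(f)_{|Y} = \varphi(Y)(f_{|Y}) = 0$. Because $X$ is connected and smooth and $Y$ is a non-empty affinoid subdomain, Proposition \ref{InjRestrs} asserts that the restriction map $\mathcal{O}(X) \to \mathcal{O}(Y)$ is injective, so $\psi(f) = 0$. Hence $\varphi(X) = \psi = 0$, as required.

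There is no single serious obstacle; the one subtlety worth flagging is that one cannot hope to approximate every element of $\mathcal{O}(X)$ directly by polynomials in the $x_i$ (the étale map $g$ need not present $\mathcal{O}(X)$ as a quotient of a Tate algebra in $d$ variables), which is precisely why the passage to a sufficiently small subdomain via Lemma \ref{lem:dense} is necessary — and this is the step that forces the algebraic closure hypothesis on $K$.
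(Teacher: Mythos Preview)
Your proof is correct and follows essentially the same route as the paper: the induction to kill monomials, the passage to the subdomain $Y$ from Lemma \ref{lem:dense} where polynomials are dense, and the pullback via the injectivity of restriction in Proposition \ref{InjRestrs}. The only cosmetic difference is that the paper first argues $\varphi(X)\circ g^\sharp = 0$ on $\O(\mathbb{D}^d)$ before restricting, whereas you go directly to $\varphi(Y)$; both organizations amount to the same density-and-continuity argument.
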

\begin{proof} We will first show by induction on $|\alpha|$ that $\varphi(X)(x^\alpha) = 0$ for all $\alpha \in \mathbb{N}^d$. This is true when $\alpha = 0$ because $\varphi(X)(1) = \eta_0( \varphi(X) ) = 0$. Now 
\[ \eta_\alpha(\varphi(X)) = \frac{1}{\alpha!}\sum_{\beta \leq \alpha} \varphi(X)( x^\beta) \binom{\alpha}{\beta} (-x)^{\alpha - \beta}\]
and $|\beta| < |\alpha|$ whenever $\beta \leq \alpha$ and $\beta \neq \alpha$. Therefore $\varphi(X)(x^\beta) = 0$ for any $\beta \leq \alpha$ with $\beta \neq \alpha$ by induction, and we deduce that $\varphi(X)(x^\alpha) = \alpha!\eta_\alpha(\varphi(X)) = 0$. This completes the induction. 

Next, applying Lemma \ref{lem:dense} to $g: X \to  \mathbb{D}^d$ gives us an affinoid subdomain $t : Y \hookrightarrow X$ such that $(g \circ t)^{\sharp}:\mathcal{O}(\mathbb{D}^d) \to \mathcal{O}(Y)$ has dense image.
Consider the following commutative diagram:
\begin{equation}
\xymatrix{\mathcal{O}(\mathbb{D}^d) \ar[dr]_{(g \circ t)^\sharp} \ar[r]^{g^\sharp}& \mathcal{O}(X)\ar[d]^{t^\sharp} \ar[r]^{\varphi(X)}& \mathcal{O}(X) \ar[d]^{t^\sharp} \\ &  \mathcal{O}(Y) \ar[r]_{\varphi(Y)} & \mathcal{O}(Y). 
}
\end{equation}
Since $\varphi(X) : \O(X) \to \O(X)$ is bounded, it is continuous. The restriction map $g^\sharp$ is also continuous, by \cite[Theorem 3.2.1(7)]{FvdPut}.  Because the polynomial algebra $K[x_1,\ldots, x_d]$ is dense in $\O(\mathbb{D}^d) = K \langle x_1,\ldots, x_d \rangle$ and because we saw above that $\varphi(X)$ kills $K[x_1,\ldots, x_d]$, we deduce that $\varphi(X) \circ g^\sharp$ is zero.

The commutativity of the diagram implies that $\varphi(Y) \circ (g\circ t)^{\#}=0$. Because $(g\circ t)^{\#}$ has dense image and $\varphi(Y)$ is continuous we can conclude that $\varphi(Y)=0$. Therefore $t^\sharp \circ \varphi(X) = \varphi(Y) \circ t^\sharp = 0$, but $t^\sharp : \O(X) \to \O(Y)$ is injective by Proposition \ref{InjRestrs} because $X$ is connected, so $\varphi(X) = 0$ as required.
\end{proof}

If $K$ is not algebraically closed, then Proposition \ref{EtaInj} breaks down: take $X$ to be $\Sp L$ for some non-trivial finite field extension $K$ of $L$; then any non-zero $K$-linear endomorphism of $L$ that is zero on $K$ will provide a counterexample.

\begin{prop}\label{RhoEta} Suppose that $K$ is algebraically closed and that its residue field $k$ is uncountable. Then $\rho_X \circ \eta_X = 1_{\E_X}.$
\end{prop}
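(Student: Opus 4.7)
The plan is to combine the opposite composition identity $\eta_X \circ \rho_X = \id_{\w\D}$ (Lemma \ref{EtaRho}) with the injectivity-type statement of Proposition \ref{EtaInj} to bootstrap the desired identity $\rho_X \circ \eta_X = 1_{\E_X}$. Since both $\rho_X$ and $\eta_X$ are morphisms of sheaves on $X_w$ (by the corollary following Lemma \ref{DcapAndEareSheaves} and by Corollary \ref{EtaCor}(b)), and since the setup transfers to any $V \in X_w$ via the restricted \'etale morphism $g|_V : V \to \mathbb{D}^d$, it suffices to verify the section-level statement: for every smooth affinoid $X$ equipped with an \'etale morphism $g : X \to \mathbb{D}^d$ as in the setup, and every $\varphi \in \E(X)$, the element $\psi := \varphi - \rho_X(X)(\eta_X(X)(\varphi)) \in \E(X)$ vanishes.

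Applying the $K$-linear map $\eta_X(X) : \E(X) \to \w\D(X)$ to $\psi$ and invoking Lemma \ref{EtaRho}, the two summands cancel and $\eta_X(X)(\psi) = 0$ in $\w\D(X)$. By the uniqueness of the $\partial^\alpha$-expansion in Lemma \ref{lem:symbol}, this forces $\eta_\alpha(\psi(X)) = 0$ for every $\alpha \in \mathbb{N}^d$. Lemma \ref{EtaRestrictsWell} then propagates this to $\eta_\alpha(\psi(V)) = \eta_\alpha(\psi(X))|_V = 0$ for every $V \in X_w$ and every $\alpha \in \mathbb{N}^d$.

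To finish, it remains to show $\psi(V) = 0$ for every $V \in X_w$. Decompose $V = V_1 \sqcup \cdots \sqcup V_r$ into its finitely many connected components; each $V_i$ is a smooth connected $K$-affinoid equipped with the restricted \'etale map $g|_{V_i} : V_i \to \mathbb{D}^d$. Applying Proposition \ref{EtaInj} to $\psi|_{V_i} \in \E(V_i)$ yields $\psi(V_i) = 0$. Since $\psi$ is a sheaf morphism and $\O(V) \cong \prod_i \O(V_i)$ under the disjoint decomposition, $\psi(V)$ is determined by the collection $(\psi(V_i))_i$ and hence vanishes, as required. The substantive technical work has already been absorbed into Theorem \ref{thm:DefFun} (which requires the uncountable residue field, so that $\eta_X$ is even defined) and Proposition \ref{EtaInj} (which requires $K$ algebraically closed); the present argument is then purely formal, combining injectivity of $\eta_X$ on connected pieces with the opposite composition identity, and no further analytic obstacle arises.
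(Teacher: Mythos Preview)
Your proposal is correct and follows essentially the same approach as the paper: both reduce, via Lemma \ref{EtaRho}, to showing that any $\psi \in \E(X)$ with $\eta_X(X)(\psi)=0$ must vanish, then pass to connected affinoid subdomains and invoke Proposition \ref{EtaInj}. The paper phrases this step abstractly as ``$\eta_X$ is injective,'' whereas you work directly with the difference $\psi = \varphi - \rho_X(X)(\eta_X(X)(\varphi))$, but the content is identical.
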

\begin{proof} Note that $k$ is automatically algebraically closed by Remark \ref{rmk:resalg}(b). By Lemma \ref{EtaRho} we see that $\eta_X \circ (\rho_X \circ \eta_X - 1) = (\eta_X \circ \rho_X - 1) \circ \eta_X = 0.$ Thus it will be enough to show that $\eta_X$ is injective. 

Suppose that $\eta_X(U)(\varphi) = 0$ for some $U \in X_w$ and some $\varphi \in \E(U)$. Let $V$ be a \emph{connected} affinoid subdomain of $U$; then also $\eta_X(V)(\varphi_{|V}) = \eta_X(U)(\varphi)_{|V} = 0$ and hence $\eta_\alpha(\varphi(V)) = 0$ for all $\alpha \in \mathbb{N}^d$. Because the restriction of ${g : X \to \mathbb{D}^d}$ to $V$ is \'etale, it follows from Proposition \ref{EtaInj} that $\varphi(V) = 0$. Now $\varphi$ can be regarded as a morphism of \emph{sheaves} $\O \to \O$ on $U_w$; since every $V \in U_w$ a finite disjoint union of its connected components, we deduce that $\varphi(V) = 0$ for \emph{all} $V \in U_w$. Hence $\varphi = 0$ and $\eta_X(U)$ is injective for every $U \in X_w$. \end{proof}

We now return to full generality, and begin our proof of Theorem \ref{Main}.

\begin{prop}\label{prop:etaleDiskcovering}  Let $X$ be a smooth rigid analytic space. Then $X$ admits an admissible covering by a collection of affinoids $X_{i}$, such that there exist \'{e}tale morphisms $g_{i}:X_{i} \to \mathbb{D}^{n_i}$ to disks of various dimensions.
\end{prop}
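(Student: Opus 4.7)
Since every smooth rigid $K$-analytic space admits, by definition, an admissible cover by affinoid subdomains, it suffices to treat the case when $X$ itself is a smooth $K$-affinoid variety. The strategy is then to construct, for each point $x\in X$, an affinoid neighbourhood $X_x$ of $x$ in $X$ together with an \'etale morphism $X_x\to \mathbb{D}^{n_x}$, and then to observe that the collection $\{X_x : x\in X\}$ is an admissible covering.

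To build $X_x$, I would use smoothness to produce local coordinates at $x$. Let $n$ be the dimension of $X$ at $x$; since $X$ is smooth, $\Omega^1_{X/K}$ is a locally free $\O_X$-module of rank $n$ in a neighbourhood of $x$, and the maximal ideal $\mathfrak{m}_x$ of $\O_{X,x}$ is generated by $n$ elements whose images in $\mathfrak{m}_x/\mathfrak{m}_x^2$ form a $\kappa(x)$-basis. Choose lifts $f_1,\dots,f_n\in \O(X)$ of such generators; after translating by constants in $K$ we may assume $f_i(x)=0$, and after multiplying each $f_i$ by a sufficiently small element of $K^\times$ (using the non-trivial valuation on $K$) we may assume $|f_i|_X\le 1$ for all $i$. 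The tuple $(f_1,\dots,f_n)$ then defines a morphism $g:X\to\mathbb{D}^n$, and by construction its differential is an isomorphism at the point $x$, so $g$ is \'etale at $x$.

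The final point is to upgrade étaleness at $x$ to étaleness on an affinoid neighbourhood. Étaleness is an open condition on the source of a morphism of rigid spaces of the same relative dimension, which follows from the Jacobian criterion applied to $g$: the locus where $df_1\wedge\cdots\wedge df_n$ is a nowhere-vanishing section of $g^*\Omega^1_{\mathbb{D}^n/K}\to \Omega^1_{X/K}$ is a Zariski (hence admissible) open of $X$ containing $x$. Inside this admissible open I would pick a rational affinoid subdomain $X_x\subset X$ containing $x$; the restriction $g|_{X_x}:X_x\to\mathbb{D}^n$ is then étale. Finally, the collection $\{X_x\}_{x\in X}$ covers $X$ set-theoretically, and by quasi-compactness of any affinoid subdomain of $X$ it admits finite subcoverings on every affinoid, so it is an admissible cover of $X$.

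The main thing to check carefully is the step upgrading \'etaleness at a point to \'etaleness on an affinoid neighbourhood, since rigid analytic étaleness is defined in terms of module-theoretic conditions on all stalks rather than a single stalk; but the Jacobian criterion applied to the map $g:X\to\mathbb{D}^n$ between equidimensional smooth spaces reduces this to showing that the non-vanishing of $\det(\partial f_i/\partial x_j)$ at $x$ propagates to an admissible neighbourhood, which is standard.
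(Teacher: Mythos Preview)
Your argument is essentially correct and is the standard route to this statement. The paper, by contrast, gives no argument at all: its proof is a one-line citation of \cite[Proposition 2.7]{BLR3}, which is exactly the result you are sketching. So you have supplied a proof where the authors simply invoke the literature.

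One small wrinkle: the sentence ``after translating by constants in $K$ we may assume $f_i(x)=0$'' is both unnecessary and not quite right as stated, since $x$ need not be $K$-rational and hence $f_i(x)$ lives in the residue field $\kappa(x)$ rather than in $K$. Fortunately you do not actually use this: the subsequent scaling by a small element of $K^\times$ already forces $|f_i|_X\le 1$, which is all that is needed for $(f_1,\dots,f_n)$ to define a morphism to $\mathbb{D}^n$, and scaling by a nonzero constant does not disturb the condition that $df_1,\dots,df_n$ generate $\Omega^1_{X/K}$ at $x$. So simply drop the translation step. The remaining points (openness of the \'etale locus via the Jacobian criterion, existence of an affinoid neighbourhood inside an admissible open, and admissibility of the resulting covering) are standard and your justifications are adequate.
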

\begin{proof} This follows from \cite[Proposition 2.7]{BLR3}.\end{proof}

We now prove the central result of this article. The (a) $\Rightarrow$ (b) part of Theorem \ref{Main} is proven in Proposition \ref{atob}.
\begin{prop}\label{btoa} (b) $\Rightarrow$ (a) from Theorem \ref{Main} holds.
\end{prop}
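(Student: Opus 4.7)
The plan is to reduce from a general smooth rigid $K$-analytic space $X$ to the smooth affinoid case equipped with an étale morphism to a polydisc, where the work has already been done. Proposition \ref{prop:etaleDiskcovering} supplies an admissible covering $\{X_i\}_{i \in I}$ of $X$ by smooth $K$-affinoid subdomains, each equipped with an étale morphism $g_i : X_i \to \mathbb{D}^{n_i}$. Since $\w\D_X$ and $\E_X$ are both sheaves on $X_w$ by Lemma \ref{DcapAndEareSheaves} and $\rho_X$ is a morphism of sheaves, being an isomorphism is a local property on the rigid site; so it suffices to check that the restriction of $\rho_X$ to each $X_i$ is an isomorphism of sheaves on $(X_i)_w$.

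Next I would unwind the construction of $\rho_X$ in Lemma \ref{DCapAction} and the corollary defining $\rho_X(U)(a) = (\rho(U,V)(a))_{V \in U_w}$ to verify that the restriction of $\rho_X$ to the affinoid subdomain $X_i \subseteq X$ coincides with the morphism $\rho_{X_i}$ built directly from the étale map $g_i$. This is essentially immediate from the compatibility of the operators $\rho(U,V)$ with passing to smaller affinoid subdomains, which is the commutative square built into the statement of Lemma \ref{DCapAction}. Since $K$ is algebraically closed with uncountable residue field $k$, and since $k$ is automatically algebraically closed as well by Remark \ref{rmk:resalg}(b), the hypotheses of Lemma \ref{EtaRho} and Proposition \ref{RhoEta} are in force on every $X_i$. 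Together those two results produce a two-sided sheaf-theoretic inverse $\eta_{X_i} : \E_{X_i} \to \w\D_{X_i}$ of $\rho_{X_i}$, so $\rho_{X_i}$ is a sheaf isomorphism on $(X_i)_w$. Assembling over the cover, $\rho_X$ is an isomorphism of sheaves on $X_w$.

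I do not anticipate a serious obstacle in this last step: the substantive work — defining $\eta_X$ via the total symbol, establishing rapid decay of its coefficients using the uncountability of the residue field through Theorem \ref{thm:DefFun}, and proving both inverse identities $\eta_X \circ \rho_X = \id$ and $\rho_X \circ \eta_X = \id$ — has already been carried out in the affinoid case. The only genuinely new bookkeeping is the identification of the restriction $\rho_X|_{X_i}$ with the locally defined $\rho_{X_i}$, which is built into the construction and therefore essentially formal.
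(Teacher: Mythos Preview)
Your proposal is correct and follows essentially the same route as the paper's proof: reduce to the affinoid-with-\'etale-map-to-a-polydisc case via Proposition \ref{prop:etaleDiskcovering} and the sheaf property of Lemma \ref{DcapAndEareSheaves}, then invoke Lemma \ref{EtaRho} and Proposition \ref{RhoEta} (together with Corollary \ref{EtaCor}(b), which you reference indirectly through Theorem \ref{thm:DefFun}) to exhibit $\eta_{X_i}$ as a two-sided inverse of $\rho_{X_i}$. Your extra paragraph verifying that $\rho_X|_{X_i}$ agrees with $\rho_{X_i}$ is a point the paper leaves implicit, and it is indeed formal from the construction in Lemma \ref{DCapAction}.
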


\begin{proof}By Lemma \ref{DcapAndEareSheaves}, $\w\D_X$ and $\E_X$ are sheaves on $X_w$. Therefore, the morphism $\rho_X$ is an isomorphism if and only if its restriction to each member of an admissible affinoid covering of $X$ is an isomorphism. By Proposition \ref{prop:etaleDiskcovering}, we may therefore assume that $X$ is an affinoid variety which admits an \'etale morphism $g : X \to \mathbb{D}^d$ to some $d$-dimensional polydisc. 

Assume that $K$ is algebraically closed, and that its residue field $k$ is uncountable. Then $k$ is also algebraically closed by Remark \ref{rmk:resalg}(b). Therefore $\eta_X$ is a well-defined $\O_X$-linear morphism $\E_X \to \w\D_X$ by Corollary \ref{EtaCor}(b), and it is a two-sided inverse to $\rho_X$ by Lemma \ref{EtaRho} and Proposition \ref{RhoEta}.
\end{proof}

Let us now fix a non-Archimedean field $K$ with non-trivial valuation whose residue field $k$ is \emph{countable}, for example $K$ could be the field $\mathbb{C}_{p}$ of $p$-adic complex numbers whose residue field is $\overline{\mathbb{F}_{p}}$.  Let 
\[X := \mathbb{D} = \Sp K\langle x \rangle\]
be the closed unit disc. Because $k$ is assumed to be countable, we may choose a set $\{\lambda_{\alpha} | \alpha \in \mathbb{N} \} \subset K^\circ$ of additive coset representatives for the maximal ideal $\mathfrak{m}$ in $K^\circ$. So we have 
\[K^\circ = \coprod_{\alpha=0}^{\infty}(\lambda_\alpha +\mathfrak{m}).\] 
Now, define $\partial^{(\alpha)} := \frac{\partial^\alpha}{\alpha!}$ for any $\alpha \in \mathbb{N}^d$, and consider the formal power series
\[\xi := \sum_{\alpha=0}^{\infty}\xi_{\alpha} \pi^\alpha \partial^{(\alpha)}_{x} \in \prod_{\alpha=0}^{\infty}K\langle x \rangle \partial^{(\alpha)}_{x}
\]
where \[\xi_{\alpha} := (x-\lambda_{0})^{\alpha^{2}}\cdots (x-\lambda_{\alpha})^{\alpha^{2}}.\]
We make two claims. 

\begin{claim}\label{Claim1} For all $Y \in X_{w}$, $(\xi_{\alpha} \pi^\alpha \partial^{(\alpha)}_{x|Y})_{\alpha=0}^{\infty}$ tends to zero in the Banach algebra $\underline{\End}_{K}(\O(Y))$ of bounded $K$-linear endomorphisms of $\O(Y)$. Therefore, $\xi$ defines a global section of $\E$.
\end{claim}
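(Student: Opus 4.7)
The claim has two components: the quantitative operator-norm decay on each $\O(Y)$, and the consequent fact that $\xi$ defines a global section of $\E$. Granting the decay, the second part is immediate: $\underline{\End}_K(\O(Y))$ is a complete non-Archimedean Banach algebra, so a sequence tending to zero is automatically summable, making the formal series $\sum_\alpha \xi_\alpha \pi^\alpha \partial^{(\alpha)}_x$ converge in $\underline{\End}_K(\O(Y))$ to a bounded endomorphism $\xi_{|Y}$ of $\O(Y)$. Since each summand is a differential operator on $\mathbb{D}$, the $\xi_{|Y}$ are compatible with restrictions, and by the sheaf property in Lemma \ref{DcapAndEareSheaves} they assemble into a global section of $\E$. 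The substance of the claim is therefore the operator-norm estimate on individual affinoid subdomains.

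For the norm estimate on a given $Y \in \mathbb{D}_w$, I would invoke the sheaf property to reduce to the case when $Y$ is a standard Laurent subdomain. For any operator $T$ arising from a differential operator on $\mathbb{D}$ one has $\|T\|_{\text{op}(Y)} \leq \max_i \|T|_{Y_i}\|_{\text{op}(Y_i)}$ on any finite admissible affinoid cover $\{Y_i\}$ of $Y$; combined with the standard fact that every affinoid subdomain of $\mathbb{D}$ admits an admissible cover by Laurent subdomains $Y_0 = \{|x - c| \leq r\} \cap \bigcap_{j=1}^m \{|x - c_j| \geq r_j\}$ (with $c, c_j \in K^\circ$ and $r, r_j \in |K^*|$), this reduces the problem to these $Y_0$.

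On such a $Y_0$, the affinoid algebra $\O(Y_0)$ has a canonical orthogonal Schauder basis consisting of $(x - c)^n$ for $n \geq 0$ and $r_j^n/(x - c_j)^n$ for $n \geq 1$ and each $j$, so the sup norm of any element equals the supremum of the absolute values of its coefficients. I would compute $\xi_\alpha \partial^{(\alpha)}_x$ on each basis element using the formulas $\partial^{(\alpha)}_x(x^n) = \binom{n}{\alpha} x^{n-\alpha}$ and $\partial^{(\alpha)}_x((x - c_j)^{-n}) = (-1)^\alpha \binom{n+\alpha-1}{\alpha}(x - c_j)^{-n-\alpha}$. The decisive input is that $\xi_\alpha = \prod_{i=0}^\alpha (x-\lambda_i)^{\alpha^2}$ vanishes to order $\alpha^2$ at every coset representative $\lambda_i$ with $i \leq \alpha$, which vastly exceeds the $\alpha$-th order pole produced by $\partial^{(\alpha)}_x$ at such a point. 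A careful bookkeeping of sup norms then produces an estimate of the form
\[
\|\xi_\alpha \pi^\alpha \partial^{(\alpha)}_{x|Y_0}\|_{\text{op}(Y_0)} \leq C_{Y_0} \cdot (|\pi|/r_{Y_0})^\alpha \cdot s_{Y_0}^{\alpha^2},
\]
where $r_{Y_0}$ is the smallest Laurent radius appearing in $Y_0$ and $s_{Y_0} \leq 1$. Because the super-exponentially decaying factor $s_{Y_0}^{\alpha^2}$ dominates any exponential growth, the right-hand side tends to $0$; in the degenerate case $Y_0 = \mathbb{D}$ itself one has $r_{Y_0} = s_{Y_0} = 1$ and the estimate reduces to the trivial $|\pi|^\alpha \to 0$.

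The main technical obstacle is the bookkeeping when the excluded centres $c_j$ are not themselves coset representatives $\lambda_i$: the cancellation between zeros of $\xi_\alpha$ and poles $(x - c_j)^{-n-\alpha}$ must then be extracted indirectly by expanding $(x - \lambda_i)^{\alpha^2}$ in powers of $(x - c_j)$, where $\lambda_i$ is the coset representative in the same residue class as $c_j$. The $\alpha^2$-fold vanishing is strong enough to absorb the resulting binomial coefficients and still deliver the factor $s_{Y_0}^{\alpha^2}$ with $s_{Y_0}<1$, essentially the maximal coset-distance encountered; the polynomial in $\alpha$ lost to the binomial expansion is negligible against the quadratic exponent.
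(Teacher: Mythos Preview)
Your overall strategy---reduce to standard connected subdomains of $\mathbb{D}$ and estimate on an explicit orthogonal basis---matches the paper's, and you correctly identify the key mechanism: the $\alpha^2$-fold zero of $\xi_\alpha$ near the relevant coset representative overwhelms the $\alpha$-th order pole created by $\partial_x^{(\alpha)}$ acting on the Laurent tails. However, the displayed estimate
\[
\|\xi_\alpha \pi^\alpha \partial^{(\alpha)}_{x|Y_0}\|_{\mathrm{op}(Y_0)} \leq C_{Y_0}\,(|\pi|/r_{Y_0})^\alpha\, s_{Y_0}^{\alpha^2}, \qquad s_{Y_0}<1,
\]
is \emph{false} on any annulus $Y_0=\mathbb{D}\setminus B(a,|\tau|)$ with outer radius $1$. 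Indeed $|x^\alpha|_{Y_0}=1$ and $\xi_\alpha\pi^\alpha\partial_x^{(\alpha)}(x^\alpha)=\pi^\alpha\xi_\alpha$, while $|\xi_\alpha|_{Y_0}=1$ because the Gauss point lies in $Y_0$; hence $\|\xi_\alpha\pi^\alpha\partial_x^{(\alpha)}\|_{\mathrm{op}(Y_0)}\geq |\pi|^\alpha$. Your bound would force $r_{Y_0}^\alpha\leq C_{Y_0}\,s_{Y_0}^{\alpha^2}$, which fails for large $\alpha$ whenever $s_{Y_0}<1$. So the super-exponential factor simply is not there in this regime: the cancellation between the zero of $\xi_\alpha$ and the pole $(x-a)^{-\alpha}$ only yields \emph{boundedness} of $\|\xi_\alpha\partial_x^{(\alpha)}\|_{\mathrm{op}(Y_0)}$, and it is the extra $\pi^\alpha$ that gives convergence.

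The paper organises the argument around exactly this dichotomy. If $Y$ sits inside a closed disc of radius strictly less than $1$, then $|\xi_\alpha|_Y\leq s^{\alpha^2}$ for some $s<1$ (this is where your $s_{Y_0}^{\alpha^2}$ genuinely appears), and one bounds $\|\partial_x^{(\alpha)}\|$ crudely. If instead $Y=\mathbb{D}\setminus\bigl(B(a_1,|\tau_1|)\cup\cdots\cup B(a_n,|\tau_n|)\bigr)$, the paper treats $n=0$ and $n=1$ by the basis computation you sketch (obtaining only a uniform bound $C$, not decay), and for $n\geq 2$ avoids the combinatorics of multiple holes altogether by an induction: writing $Y=Z\cap T$ with $Z,T$ having fewer holes and $Z\cup T=\mathbb{D}$, Tate acyclicity gives a short exact sequence $0\to\O(\mathbb{D})\to\O(Z)\oplus\O(T)\to\O(Y)\to 0$ of $\D(\mathbb{D})$-modules, from which convergence on $\O(Y)$ follows. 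Your direct basis approach for general $n$ can be made to work, but you must replace the single displayed estimate by the correct two-case statement.
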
 
\begin{claim}\label{XiNotInDCap} \[\xi \notin \im \left(\wideparen{{\mathcal{D}}}(X) \to  \prod_{\alpha=0}^{\infty}K\langle x \rangle\partial^{(\alpha)}_{x}\right)\]
\end{claim}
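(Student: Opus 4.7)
The strategy is to invoke Lemma \ref{lem:symbol}, which identifies $\w\D(X)$ for $X = \mathbb{D}$ with the set of formal power series $\sum_\alpha a_\alpha \partial^\alpha$ whose coefficient family $(a_\alpha) \subset \O(X) = K\langle x \rangle$ is rapidly decreasing. Since $\partial^{(\alpha)} = \partial^\alpha/\alpha!$, if $\xi$ were in the image then it would have to be presented as $\sum_\alpha a_\alpha \partial^\alpha$ with $a_\alpha = \xi_\alpha \pi^\alpha / \alpha!$, and I would need to contradict the rapid decrease of this family.

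The first step is to compute $|\xi_\alpha|_X$. Each factor $x - \lambda_i$ of $\xi_\alpha$ has Gauss norm $\max(|x|_X, |\lambda_i|) = 1$, using $\lambda_i \in K^\circ$. Multiplicativity of the Gauss norm on $K\langle x \rangle$ then gives $|\xi_\alpha|_X = 1$ (the polynomial is monic with integral coefficients), and hence $|a_\alpha|_X = |\pi|^\alpha / |\alpha!|$. Next, using the equivalent formulation in Remark \ref{rmk:Equiv}(c), rapid decrease of $(a_\alpha)$ demands that $\sup_\alpha |a_\alpha|_X / |\pi|^{r\alpha} = \sup_\alpha |\pi|^{(1-r)\alpha}/|\alpha!|$ is finite for every $r \in \mathbb{N}$. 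Since $\alpha! \in \mathbb{Z}$ and the non-Archimedean inequality forces $|n| \leq 1$ for every rational integer $n$, we have $|\alpha!| \leq 1$ and therefore $|a_\alpha|_X / |\pi|^{r\alpha} \geq |\pi|^{(1-r)\alpha}$. Choosing any $r \geq 2$ makes $|\pi|^{1-r} > 1$, so the lower bound tends to infinity with $\alpha$; the supremum is therefore infinite, which contradicts rapid decrease. Thus $\xi \notin \w\D(X)$.

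There is no genuine obstacle in this argument: the only computation is the Gauss norm of a monic polynomial, which is immediate, and the rest is bookkeeping. The conceptual heart is that the factor $\pi^\alpha$ built into the definition of $\xi$ (which in Claim \ref{Claim1} keeps $\xi$ bounded as an endomorphism of each $\O(Y)$) is overwhelmed, upon rewriting $\partial^{(\alpha)} = \partial^\alpha/\alpha!$, by the non-Archimedean growth of $1/|\alpha!|$. It is precisely this mismatch between operator-theoretic boundedness and the stronger rapid-decay condition cutting out $\w\D$ that separates $\E_X$ from $\w\D_X$ when the residue field is countable.
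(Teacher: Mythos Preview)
Your proof is correct and follows essentially the same route as the paper: invoke Lemma~\ref{lem:symbol} to reduce to showing $(\xi_\alpha \pi^\alpha/\alpha!)_\alpha$ is not rapidly decreasing, compute $|\xi_\alpha|_X = 1$ via the Gauss norm, use $|\alpha!|\leq 1$, and take $r=2$ to obtain a divergent lower bound. One small remark on your closing commentary: the argument does not actually rely on $1/|\alpha!|$ \emph{growing} (indeed it is constant when the residue characteristic is zero) --- the bound $|\alpha!|\leq 1$ suffices, and the failure of rapid decrease comes simply from $|\pi|^\alpha$ not decaying faster than $|\pi|^{2\alpha}$.
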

Together, these claims show that the map $\rho : \w\D\to \E$ is not an isomorphism over the one dimensional closed unit disk, when $K$ has countable residue field. 

We start with the proof of Claim \ref{Claim1}. First of all, we can assume that $Y$ is connected. Otherwise, $Y$ has a finite number of connected components $Y_i$ and $(\xi_{\alpha} \pi^\alpha \partial^{(\alpha)}_{x})_{\alpha=0}^{\infty}$ being a zero sequence in $\underline{\End}_{K}( \O(Y) )$  is equivalent to its image in $\underline{\End}_{K}(\O(Y_i))$ being a zero sequence for each $i$. The next step is to prove it in the case that $Y$ is contained in a closed disk $Z$ of radius \emph{strictly} less than $1$. 

Thus, assume that there exist $a, \rho \in \O_K$ with $|\rho| < 1$ such that $Y \subseteq Z := \Sp K\langle z \rangle$, where $z := \frac{x-a}{\rho}$. There exists a unique integer $\gamma \geq 0$ such that 
\[ |\epsilon| < 1 \quad\text{where} \quad \epsilon := a - \lambda_\gamma.\]
Because $x = a + \rho z$ and because $|z|_Z \leq 1$, we see that 
\[|x-\lambda_\gamma|_Z=|\epsilon+\rho z|_Z \leq \max\{ |\epsilon|, |\rho|  \} < 1.\]
Now let $\beta \in \mathbb{N}$. Because $|\lambda_\gamma - \lambda_\beta| = \delta_{\beta,\gamma}$ by construction, 
  \[
    |x-\lambda_{\beta}|_{Z} = |\lambda_\gamma - \lambda_\beta + (x - \lambda_\gamma)|_Z =  \left\{\begin{array}{lr}
        |\epsilon+\rho z|_Z & \text{if } \beta= \gamma\\
       1 & \text{otherwise. }
        \end{array}\right.
  \]
Hence, whenever $\alpha \geq \gamma$ we see that
\[   |\xi_{\alpha}|_Z
    = \prod_{\beta=0}^\alpha |x-\lambda_{\beta}|^{\alpha^{2}}_Z 
    = |\epsilon+\rho z|_Z^{\alpha^2}
    \leq (\max \{|\epsilon|,|\rho|\})^{\alpha^2}.
  \]
  Therefore, $(\xi_{\alpha|Z})$ is rapidly decreasing: for any $N \geq 0$, whenever $\alpha \geq \gamma$ we see that
  \[\log(|\xi_{\alpha}|_{Z} / |\pi|^{N\alpha}) \leq -(\log |\pi|)N \alpha+ \log(\max \{|\epsilon|,|\rho|\})\alpha^{2}
  \]
which tends to $-\infty$ as $\alpha \to \infty$ because $\log(\max \{|\epsilon|,|\rho|\})<0$. Therefore
\[ \lim\limits_{\alpha\to\infty} \frac{|\xi_\alpha|_Z}{|\pi|^{N\alpha}}=0 \quad \mbox{whenever}\quad N \geq 0\]
as claimed. Next, let $R > 0$ be the operator norm of the restriction of $\partial_x$ to $\O(Y)$. Then the operator norm of the restriction of $\partial_x^{(\alpha)} = \frac{\partial_x^\alpha}{\alpha!}$ to $\O(Y)$ is at most $R^\alpha / |\alpha!|$. Choose a positive integer $N$ such that $|\pi|^N < \varpi / R$. Then 
\[ \frac{R^\alpha}{|\alpha!|} \leq \frac{R^{\alpha}}{\varpi^{\alpha}} < \frac{1}{|\pi|^{N\alpha}} \qmb{for all} \alpha \in \mathbb{N} \]
by Lemma \ref{lem:FactGrow}. The restriction map $\O(Z) \to \O(Y)$ is contracting, and we deduce that
\[||\xi_\alpha \partial_x^{(\alpha)}||_Y \leq |\xi_\alpha|_Y \frac{R^\alpha}{|\alpha!|} \leq \frac{|\xi_{\alpha}|_Z}{|\pi|^{N\alpha}} \to 0\]
as $\alpha \to \infty$. Therefore $(\xi_{\alpha} \pi^\alpha \partial^{(\alpha)}_{x|Y})_{\alpha=0}^{\infty}$ is a zero sequence in the Banach algebra $\underline{\End}_{K}(\O(Y))$, as claimed.

  Suppose now there does \emph{not} exist a closed disk as above containing $Y\in X_w$. Recall \cite[Proposition 2.2.6]{FvdPut} which classifies the objects in $X_w$ and tells us that since $Y$ is not contained in any closed disc of radius strictly less than one, we have 
 \[Y = X-\left(B(a_1, |\tau_1|) \cup \cdots \cup B(a_n, |\tau_n|) \right)\]
  for some $a_1, \dots, a_n \in X(K)$ and $0<|\tau_1|, \dots, |\tau_n| \leq 1$.  Here $B(a_i,|\tau_i|)$ denotes the open rigid analytic disc of radius $|\tau_i|$ around $a$. We proceed by induction on $n\geq 0$. First, we observe that
\begin{equation}\label{DeltaEstimate}\underset{\delta \geq 0}\sup \left|\xi_{\alpha}\partial^{(\alpha)}_{x}(x^\delta)\right|_{Y}= \underset{\delta \geq 0}\sup \left|\binom{\delta}{\alpha} \xi_\alpha x^{\delta - \alpha}\right|_{Y} \leq 1\end{equation}
where the last inequality follows from the facts that $\xi_\alpha $ is a monic polynomial in $K^\circ[x]$, and that $|x|_Y = 1$. This already implies that when $n = 0$ (and therefore, when $Y = X$), the operator norm of $\xi_\alpha \partial^{(\alpha)}_x$ on $Y$ is bounded above by $1$ for any $\alpha$. Since $|\pi^\alpha| \to 0$ as $\alpha \to \infty$, it follows that $(\xi_{\alpha} \pi^\alpha \partial^{(\alpha)}_{x|Y})_{\alpha=0}^{\infty}$ tends to zero in the Banach algebra $\underline{\End}_{K}(\O(Y))$ in this case.

Next, suppose that $n=1$, and write $Y:= \Sp K\langle x,\frac{\tau}{x-a}\rangle$. Then  \[\{x^{\delta} \ \ | \ \ \delta\geq 0\} \cup \{z_\beta := \left(\frac{\tau}{x-a}\right)^{\beta+1} \ \ | \ \  \beta \geq 0\}  \]
is a topological basis for $\mathcal{O}(Y)= K\langle x,\frac{\tau}{x-a}\rangle$, consisting of elements of supremum norm $1$. Because $|z_\beta|_Y = 1$ and $z_{\beta} \in \mathcal{O}(Y)^{\times}$, we see that
\[|\xi_\alpha \partial^{(\alpha)}_{x}(z_{\beta})|_{Y}=|z_{\beta} z^{-1}_{\beta}\xi_\alpha \partial^{(\alpha)}_{x}(z_{\beta})|_{Y}\leq |z^{-1}_{\beta}\xi_\alpha  \partial^{(\alpha)}_{x}(z_\beta)|_{Y}.
\]
Next, notice that 
\begin{equation}\label{LogDer}\begin{split}z^{-1}_{\beta} \partial^{(\alpha)}_{x}(z_\beta)& =(x-\alpha)^{\beta+1}\binom{\alpha+\beta}{\alpha}(-1)^{\alpha}(x-\alpha)^{-\beta-1-\alpha} \\ &=(-1)^{\alpha}\binom{\alpha+\beta}{\alpha}(x-a)^{-\alpha}.\end{split}\end{equation}
Let $y=x-a$ and let $\gamma$ be the unique non-negative integer such that $a\in \lambda_\gamma + \mathfrak{m}$. Define $\rho$ as $a-\lambda_\gamma$, so that $|\rho|<1$. Then
\[\left(\frac{(y+\rho)^{\alpha}}{y} \right)^{\alpha}= \left( \rho^\alpha y^{-1} + \binom{\alpha}{1}\rho^{\alpha-1}+ \binom{\alpha}{2}\rho^{\alpha-2}y+\cdots +y^{\alpha-1}\right)^{\alpha}
\]
which, recalling that $z_0=\frac{\tau}{x-a}=\frac{\tau}{y}$, we can rewrite as 
\begin{equation}\label{Yrho}\left(\frac{(y+\rho)^{\alpha}}{y} \right)^{\alpha}=(\frac{\rho^\alpha}{\tau}z_0+f_{\alpha}(x))^{\alpha} \quad \mbox{for some} \quad f_\alpha(x) \in K^\circ[x].\end{equation}
We have $|z_{\beta}|_Y=1$ and $|x - \lambda_\beta|_Y = 1$ for all $\beta \geq 0$. Hence, combining equations $(\ref{LogDer})$ and $(\ref{Yrho})$, we see that for any $\alpha \geq \gamma$ and any $\beta \geq 0$ we have
\[
\begin{split}|z_{\beta}^{-1}\xi_{\alpha}\partial_{x}^{(\alpha)}(z_{\beta})|_{Y}& =\left|\binom{\alpha+\beta}{\alpha}(x-\lambda_0)^{\alpha^2}\cdots (x-\lambda_\alpha)^{\alpha^2}(x-a)^{-\alpha}\right|_{Y} \\&\leq\left|\frac{(x-\lambda_\gamma)^{\alpha^2}}{(x-a)^{\alpha}}\right|_Y
\\&= \left|\left(\frac{(y+\rho)^{\alpha}}{y}\right)^{\alpha}\right|_{Y} \\ & = |\frac{\rho^{\alpha}}{\tau}z_0+f_{\alpha}(x)|^{\alpha}_{Y}.
\end{split}
\]
Since $|z_0|_Y = 1$ and $|f_\alpha(x)|_Y \leq 1$, we see that for any $\alpha \geq \gamma$ and any $\beta \geq 0$ we have
\[|z_{\beta}^{-1}\xi_{\alpha}\partial_{x}^{(\alpha)}(z_{\beta})|_{Y}\leq (\max \{\frac{|\rho|^\alpha}{|\tau|},1\})^{\alpha}.\]
However, since $|\rho| < 1$, $|\rho|^\alpha / |\tau|$ tends to zero as $\alpha \to \infty$. Hence
\[ C := \sup_{\alpha \geq 0} (\max \{\frac{|\rho|^\alpha}{|\tau|},1\})^{\alpha}\]
is finite, and we deduce that 
\begin{equation}\label{BetaEstimate}|\xi_{\alpha} \partial_{x}^{(\alpha)}(z_\beta)|_Y \leq C \quad\mbox{whenever} \quad \alpha \geq \gamma \quad\mbox{and}\quad \beta \geq 0.\end{equation}
Because $\{x^\delta, z_\beta : \delta,\beta\in \mathbb{N}\}$ forms a topological basis for $\O(Y)$ consisting of elements of supremum norm $1$, we combine estimates $(\ref{DeltaEstimate})$ and $(\ref{BetaEstimate})$ to obtain the bound
\[||(\xi_\alpha \partial^{(\alpha)}_x)|_{Y}|| \leq \sup \{|\xi_\alpha \partial^{(\alpha)}_x (x^{\delta})|_Y,|\xi_\alpha \partial^{(\alpha)}_x (z_\beta)|_Y : \beta,\delta\in \mathbb{N}\} \leq \max\{1,C\} = C
\]
whenever $\alpha \geq \gamma$. Since $|\pi^\alpha| \to 0$ as $\alpha \to \infty$, we see that $(\xi_{\alpha} \pi^\alpha \partial^{(\alpha)}_{x|Y})_{\alpha=0}^{\infty}$ tends to zero in $\underline{\End}_{K}(\O(Y))$, as claimed.

Finally, suppose that $n\geq 2$. Recall that 
\[Y = X-\left(B(a_1, |\tau_1|) \cup \cdots \cup B(a_n, |\tau_n|) \right),\] 
where the open discs $B(a_i,|\tau_i|)$ are pairwise disjoint for $i=1,\ldots n$. Let 
\[T := X - B(a_n, |\tau_n|)\quad\mbox{and} \quad Z:=X-\left(B(a_1, |\tau_1|) \cup \cdots \cup B(a_{n-1}, |\tau_{n-1}|) \right).\]
Because these affinoid subdomains of $X$ satisfy
\[Z \cup T= X\quad\mbox{and}\quad Z \cap T  = Y,\]
we have the exact sequence of $\mathcal{D}(X)$-modules 
\[0\to \O(X) \to \O(Z)\oplus \O(T) \to \O(Y)\to 0\]
by Tate's Acyclicity Theorem \cite[Theorem 4.2.2]{FvdPut}. The operators $\xi_\alpha\pi^\alpha \partial^{(\alpha)}_x$ converge to zero as $\alpha \to \infty$ with respect to the operator norm on $\O(X)$, $\O(Z)$ and $\O(T)$ by induction. Therefore they also do so on $\O(Y)$. \qed

\begin{proof}[Proof of Claim \ref{XiNotInDCap}] By Lemma \ref{lem:symbol}, it is enough to show that the family $(\xi_{\alpha} \pi^{\alpha} / \alpha!)_{\alpha \in \mathbb{N}}$ is \emph{not} rapidly decreasing.  Now $|x - \lambda|_X = 1$ for any $\lambda \in K^\circ$ because $|\cdot|_X$ is the Gauss norm on $K\langle x \rangle$, so
\[|\xi_{\alpha}|_X = |(x-\lambda_{0})^{\alpha^{2}}\cdots (x-\lambda_{\alpha})^{\alpha^{2}}|_X = 1, \qmb{for all} \alpha \geq 0.\]
Now take $n = 2$. Since $|\alpha!| \leq 1$ for all $\alpha \geq 0$, we obtain
\[ \left\vert \frac{ \xi_\alpha \pi^{\alpha} }{\alpha! \pi^{2 \alpha} } \right\vert \geq |\pi|^{-\alpha} \qmb{for all} \alpha \in \mathbb{N}.\]
Since $|\pi|^{-1} >1$, this tends to $+\infty$ as $\alpha \to \infty$.
\end{proof}

\begin{prop}\label{atob} (a) $\Rightarrow$ (b) as in Theorem \ref{Main}.
\end{prop}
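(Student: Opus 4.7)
The plan is to prove the contrapositive of (a) $\Rightarrow$ (b): assuming (b) fails I will exhibit a smooth rigid $K$-analytic space $X$ on which $\rho_X$ is not an isomorphism. The negation of (b) splits into two cases, which I handle separately: either $K$ is not algebraically closed, or the residue field $k$ is countable.

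First I would treat the case in which $K$ is not algebraically closed. Choose any nontrivial finite extension $L/K$; this is automatically separable because $\Char K = 0$, so $X := \Sp L$ is a smooth $K$-affinoid variety (it is étale over $\mathbb{D}^0 = \Sp K$, the $n_i = 0$ case of Proposition \ref{prop:etaleDiskcovering}). Since $L/K$ is algebraic and separable we have $\T(X) = \Der_K(L) = 0$, and consequently the enveloping algebra collapses to $\O(X) = L$, giving $\w\D(X) = L$ acting on $\O(X) = L$ by left multiplication. Thus $\rho_X(X) : L \to \End_K(L) = \E(X)$ sends $a$ to the multiplication operator $m_a$, whose image has $K$-dimension $[L:K] < [L:K]^2 = \dim_K \End_K(L)$. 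In particular, any nonzero $K$-linear endomorphism $\varphi$ of $L$ with $\varphi(1) = 0$ is a section of $\E(X)$ not in the image of $\rho_X(X)$, so $\rho_X$ fails to be an isomorphism.

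For the remaining case, in which $K$ is algebraically closed but its residue field $k$ is countable, I would directly invoke the construction of the element $\xi$ preceding Claims \ref{Claim1} and \ref{XiNotInDCap} on $X := \mathbb{D}$. Claim \ref{Claim1} supplies a bona fide global section $\xi \in \E(X)$, while Claim \ref{XiNotInDCap} shows that $\xi$ is not in the image of the canonical map $\w\D(X) \to \prod_{\alpha=0}^{\infty} K\langle x \rangle \partial_x^{(\alpha)}$, and hence not in the image of $\rho_X(X)$. Therefore $\rho_X$ is not an isomorphism over $\mathbb{D}$.

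Essentially all the technical work has been carried out in the two claims above and in the observation about $\Sp L$, so the proof of Proposition \ref{atob} reduces to assembling the two counterexamples and noting that the affinoid $\Sp L$ is smooth in characteristic zero; I do not anticipate any genuine obstacle.
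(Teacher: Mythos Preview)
Your proposal is correct and follows essentially the same approach as the paper: the paper also uses $X=\Sp L$ for a nontrivial finite extension $L/K$ to show $K$ must be algebraically closed, and invokes Claims~\ref{Claim1} and~\ref{XiNotInDCap} on the closed unit disc to show the residue field must be uncountable. Your added remarks (that $\Sp L$ is \'etale over $\mathbb{D}^0$ in characteristic zero, and the explicit choice of $\varphi$ with $\varphi(1)=0$) are minor elaborations of the same argument.
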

We now finish the second part of the proof of Theorem \ref{Main}, having proven the first part in Proposition \ref{btoa}.
\begin{proof} [Proof of Theorem \ref{Main} (a) $\Rightarrow$ (b)]
Claims \ref{Claim1} and \ref{XiNotInDCap} show that the residue field of $K$ has to be uncountable, otherwise $\rho_X$ is not surjective when $X = \Sp K \langle x \rangle$ is the closed unit disc. On the other hand, let $L$ be a finite field extension of $K$, and let $X = \Sp L$, a smooth rigid $K$-analytic variety of dimension zero. Then $\w\D(X) = L$ and $\E(X) = \End_K(L)$, and the map $\rho(X) : L \to \End_K(L)$ is the natural inclusion given by the action of $L$ on itself by left-multiplicatin. If $\rho(X)$ is an isomorphism then this forces $\dim_KL = 1$, so $L = K$ and $K$ must be algebraically closed.
\end{proof}
\begin{lem}The maps $\rho$ and $\eta$ take bounded sets to bounded sets. Therefore, when $X$ is a smooth rigid analytic variety over a non-trivially valued, non-Archimedean field which is algebraically closed and has uncountable residue field, we have established an isomorphism of bornological (and hence Fr\'{e}chet) sheaves $\E \cong \w\D$ over $X_w$.
\end{lem}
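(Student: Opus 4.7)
The plan is to verify each bornological boundedness claim separately and then upgrade the set-theoretic inverse property established in Lemma \ref{EtaRho} and Proposition \ref{RhoEta} to an isomorphism in the appropriate category. Since $\w\D$ and $\E$ are sheaves of bornological $K$-algebras, boundedness of a sheaf morphism is local in the weak topology, so by Proposition \ref{prop:etaleDiskcovering} it suffices to check the claim on affinoids $U \in X_w$ admitting an \'etale morphism $g : U \to \mathbb{D}^d$.

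For $\rho$, fix $V \in U_w$ and let $\L$ be any $\O(U)^\circ$-Lie lattice stabilising $\O(V)^\circ$. Lemma \ref{DCapAction} factors $\rho(U, V)$ as the composition of the canonical continuous map $\w\D(U) \to \hK{U(\L)}$ into a Banach piece of the Fr\'echet structure on $\w\D(U)$ with the norm-preserving $K$-Banach algebra homomorphism $\hK{U(\L)} \to \End_K \O(V)$. A continuous linear map from a Fr\'echet to a Banach space is bounded, so $\rho(U, V)$ sends bounded subsets of $\w\D(U)$ to operator-norm-bounded subsets of $\End_K \O(V)$; since this holds for every $V \in U_w$, the bornology on $\E(U)$ defined in $\S \ref{BornIntro}$ shows $\rho(U)$ is bounded.

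For $\eta$, the subtlety is that the estimate (\ref{XiNormEstimate}) in the proof of Theorem \ref{thm:DefFun} was derived using a point $c(\varphi) \in U(K)$ chosen so that $|\xi_\alpha(c)| = |\xi_\alpha|_U$, a choice which cannot obviously be made uniformly over an uncountable bounded set. The fix is to replace that single-point argument by a finite polydisc cover. For any $c \in U(K)$ and $n \geq 0$, set $U_n(c) := \Sp(\O(U)\langle (x_i - g(c)_i)/\pi^n \rangle)$. Applying Lemmas \ref{EtaRestrictsWell} and \ref{lem:invariant} with $y_i := x_i - g(c)_i$ and using $|y_i|_{U_n(c)} \leq |\pi|^n$, a direct estimation yields the universal Cauchy bound
\[ |\eta_\alpha(\varphi(U))|_{U_n(c)} \leq \frac{\|\varphi(U_n(c))\| \cdot |\pi|^{n|\alpha|}}{|\alpha!|}\]
for every $\varphi \in \E(U)$ and every $c$. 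Because $K$ is algebraically closed, $U = U(K)$, so $\{U_n(c) : c \in U(K)\}$ is an admissible cover of $U$; quasi-compactness extracts a finite subcover $U_n(c_1), \ldots, U_n(c_{k_n})$. For any bounded $B \subseteq \E(U)$, the constant $M_n := \max_{j \leq k_n} \sup_{\varphi \in B} \|\varphi(U_n(c_j))\|$ is finite, and the maximum principle gives $|\eta_\alpha(\varphi(U))|_U \leq M_n |\pi|^{n|\alpha|}/|\alpha!|$ for all $\varphi \in B$. Given a seminorm index $R$, choose $n$ with $R |\pi|^n \leq \varpi$ (with $\varpi$ from Lemma \ref{lem:FactGrow}); then $R^{|\alpha|}|\pi|^{n|\alpha|} \leq \varpi^{|\alpha|} \leq |\alpha!|$, so $|\eta(U)(\varphi)|_R \leq M_n$ uniformly over $\varphi \in B$, proving boundedness of $\eta(U)$.

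Combining these boundedness statements with Lemma \ref{EtaRho} and Proposition \ref{RhoEta}, $\rho$ and $\eta$ are mutually inverse bounded sheaf morphisms, yielding an isomorphism $\E \cong \w\D$ in the category of sheaves of complete convex bornological $K$-algebras on $X_w$. Since $\w\D$ is Fr\'echet and a bornological isomorphism involving a Fr\'echet space is automatically a topological isomorphism (boundedness and continuity of linear maps coincide for Fr\'echet spaces by Banach--Steinhaus), the isomorphism persists at the level of Fr\'echet sheaves. The main obstacle is the uniformity step for $\eta$: once we realise that the Cauchy estimate on $U_n(c)$ holds universally in $c$ rather than requiring $c$ to witness $|\xi_\alpha|_U$, the use of $K = \overline{K}$ and quasi-compactness to produce a $\varphi$-independent finite polydisc cover allows extraction of a finite $M_n$ from the bornological bound.
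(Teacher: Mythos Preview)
Your treatment of $\rho$ is correct and essentially identical to the paper's: both factor $\rho(U,V)$ through a single Banach completion $\hK{U(\L)}$ for a Lie lattice $\L$ stabilising $\O(V)^\circ$, and then use that the Fr\'echet-to-Banach map $\w\D(U)\to\hK{U(\L)}$ is bounded.

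The gap is in your argument for $\eta$. The family $\{U_n(c) : c \in U(K)\}$ is \emph{not} an admissible cover of $U$ and admits no finite subcover once $n\geq 1$. Already for $U=\mathbb{D}$ with $g=\id$, two closed discs $U_n(c)=\{|x-c|\leq|\pi|^n\}$ and $U_n(c')$ are either equal or disjoint, and the set of distinct discs surjects onto the residue field $k$. Since the very hypothesis you are working under is that $k$ is uncountable, there are uncountably many pairwise disjoint members of your family, and no finite subcollection can cover $\mathbb{D}(K)$; this is the prototypical example of a non-admissible covering in rigid geometry. Consequently your constant $M_n=\max_{j\leq k_n}\sup_{\varphi\in B}\|\varphi(U_n(c_j))\|$ cannot be produced. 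The bornological hypothesis on $B$ controls $\sup_{\varphi\in B}\|\varphi(V)\|$ for each \emph{fixed} affinoid $V\in U_w$, but gives no uniform bound on $\|\varphi(U_n(c))\|$ as $c$ ranges over all of $U(K)$, which is what your pointwise estimate $|\eta_\alpha(\varphi(U))|_{U_n(c)}\leq\|\varphi(U_n(c))\|\,|\pi|^{n|\alpha|}/|\alpha!|$ would require in order to conclude.

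The paper does not attempt a covering argument for $\eta$. Instead it argues that the assignment $\psi\mapsto\sup_{\alpha}|\eta_\alpha(\psi)|_{U_i}\,r^{|\alpha|}$, viewed as a map out of the single Banach space $\underline{\End}_K(\O(U_i))$, is bounded, and then invokes only the norm-boundedness of the one projection $\{\varphi(U_i):\varphi\in B\}$.
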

\begin{proof}
In order to show that $\eta$ preserves bounded sets, we should prove that for any $U \in X_w$ and $B$ bounded in $\E(U)$ that $\eta(U)(B)$ is bounded in $\w\D(U)$. Choose an admissible cover of $U$ by affinoids $U_i$ together with \'{e}tale morphisms $g_i:U_i \to \mathbb{D}^{n_i}$.  It is enough to show that $\{\eta(U)(\varphi)|_{U_i}\}$ is bounded in $\w\D(U_i)$. We need to show that for each $i$ and real number $r>0$, there is a constant $C=C(i,r)$ such that 
\[\underset{\alpha\in \mathbb{N}^{n_i}}\sup |\eta_{\alpha}(U_i)(\varphi)|r^{\alpha} \leq C
\]
for all $\varphi \in B$. Now $\{ \varphi(U_i) |\varphi \in B \}$ is a bounded subset of $\underline{\End}_{K}(\O(U_i))$ and so because the map 
\[\underline{\End}_{K}(\O(U_i)) \to \mathbb{R}_{\geq 0}
\]
defined by 
\[\varphi \mapsto \underset{\alpha\in \mathbb{N}^{n_i}}\sup |\eta_{\alpha}(U_i)(\varphi)|_{U_i}r^{\alpha} =\underset{\alpha\in \mathbb{N}^{n_i}}\sup \left|\sum_{\beta\leq\alpha}\binom{\alpha}{ \beta }(\varphi(U_i)(x^{\beta}))(-x)^{\alpha-\beta}\right|_{U_i}r^{\alpha} 
\]
is bounded (this map is well defined by Proposition \ref{btoa}), the image of  $\{ \varphi(U_i) |\varphi \in B \}$ is bounded and so we can find $C$ as needed.

In order to show that $\rho$ preserves bounded sets, it is enough according to Equation (\ref{inthom}) to prove that any $U \in X_w$ and for any bounded set $B \subset \w\D(U)$ that the projection $\rho(U,V)(B)$ of $\rho(U)(B)$ to $\underline{\End}_{K}(\O(V))$ is bounded in the operator norm for any $V\in U_w$. Now, \cite[Lemma 7.6(a)]{DCapOne} gives us an $\O(U)^\circ$-Lie lattice $\L$ in $\T(U)$ such that $\L \cdot \O(V)^\circ \subseteq \O(V)^\circ$. Expressed in a different way, $\rho(U,V)(\L)$ is contained in the unit ball, $\C$ say, of the Banach algebra $\underline{\End}_{K}(\O(V))$. It follows that $\rho(U,V)( \h{U(\L)} ) \subseteq \C$, also. The map $\w\D(U) \to \hK{U(\L)}$ is bounded by the definition of the bornology on the Fr\'echet algebra $\w\D(U)$, so we can find a scalar $\lambda \in K$ such that $B \subseteq \lambda \h{U(\L)}$. Therefore
\[ \rho(U,V)(B) \subseteq \rho(U,V)( \lambda \h{U(\L)} ) \subseteq \lambda \C\]
and hence $\rho(U,V)(B)$ is a bounded subset of $\underline{\End}_{K}(\O(V))$ as required.\end{proof}
\section{Conclusions and outlook} In future work we plan to investigate non-Archimedean contexts other than the rigid analytic one for which it may be possible to show the equivalence of the two sheaves in greater generality. We believe there is an interesting statement to be made for non-Archimedean fields that are non-necessarily algebraically closed or with uncountable residue field. We also intend to study the Riemann-Hilbert correspondence in the non-Archimedean context, hoping to view it as an instance of derived Morita equivalence. This requires proving that the higher sheaf-Ext groups of the structure sheaf with itself vanish. This was the point of view taken in the context of complex manifolds in \cite{Meb} and \cite{PrSch} based in part on ideas of Kashiwara.
\bibliographystyle{plain}
\bibliography{references}
\end{document}